\documentclass[11pt]{amsart}

\usepackage{amsfonts}
\usepackage{amsmath}

\setcounter{MaxMatrixCols}{12} \textheight=600pt

\textwidth=400pt \hoffset=-50pt

\newtheorem{thm}{Theorem}[section]

\newtheorem{prop}[thm]{Proposition}
\newtheorem*{prob*}{Problem}
\newtheorem*{thm*}{Theorem}

\theoremstyle{definition}
\newtheorem{defn}[thm]{Definition}

\newtheorem*{defn*}{Definition}
\newtheorem{rem}[thm]{Remark}

\newtheorem*{rem*}{Remark}
\numberwithin{equation}{section}

\newcommand{\bx}{\breve{x}}

\newcommand{\C}{\mathbb C}

\newcommand{\R}{\mathbb R}
\newcommand{\Y}{\mathbb Y}
\newcommand{\Z}{\mathbb Z}

\newcommand{\X}{\mathfrak{X}}

\DeclareMathOperator{\Conf}{Conf}

\DeclareMathOperator{\Pf}{Pf} 
 
\DeclareMathOperator{\Spectral}{Spectral}

\begin{document}
\title[Representation theory  and Pfaffian point processes]
 {\bf{Representation theory of the infinite symmetric group and Pfaffian point processes}}
\author{Eugene Strahov}

\thanks{Einstein Institute of Mathematics, The Hebrew University of
Jerusalem, Givat Ram, Jerusalem 91904. E-mail:
strahov@math.huji.ac.il. Supported in part by the US-Israel Binational Science
Foundation (BSF) Grant No.\ 2006333,
 and by the Israel Science Foundation (ISF) Grant No.\ 1441/08.\\ }

\keywords{Infinite symmetric group,
 Gelfand pairs, random Young diagrams, Pfaffian point processes, symplectic ensembles}

\commby{}
%%%%%%%%%%%%%%%%%%%%%%%%%%%%%%%%%%%%%%%%%%%%%%%%%%%%%%%%%%
\begin{abstract}
We construct a family of Pfaffian point processes relevant for the harmonic analysis on the infinite symmetric group.
The correlation functions of these processes are representable as Pfaffians with matrix valued kernels.
We give explicit formulae for the matrix valued kernels in terms of the classical Whittaker functions. The obtained formulae have the same structure
as that arising in the study of symplectic ensembles of Random Matrix Theory.

The paper is an extended version of the author's talk at
Fall 2010 MSRI Random Matrix Theory program.
\end{abstract}
\maketitle
\section{Introduction}
Let $S(\infty)$ denote the group whose elements are finite permutations of $\{1,2,\ldots\}$.
The group $S(\infty)$ is called the infinite symmetric group, and it is a model example of a ``big''
group. The harmonic analysis for such groups  is an active
topic of modern research, with connections to different areas of
mathematics from enumerative combinatorics to random growth models
and to the theory of Painlev\'e equations.  A theory of harmonic
analysis on the infinite symmetric  and infinite-dimensional
unitary groups is developed  by Kerov, Olshanski and Vershik
\cite{KOV1, KOV2}, Borodin and Olshanski
\cite{Borodin-1,Borodin-2,Borodin-3,Borodin-4}.
 For an introduction to
harmonic analysis on the infinite symmetric group see Olshanski
\cite{olshanski2003}.  A paper by Borodin and Deift
\cite{BorodinDeift} studies differential equations arising in the
context of harmonic analysis on the infinite-dimensional unitary
group, and a paper by Borodin and Olshanski \cite{BorodinCombin}
describes the link to problems of enumerative combinatorics, and
to certain random growth models.

Borodin and Olshanski \cite{Borodin-1,Borodin-2,Borodin-3,Borodin-4} have shown that the problem of the harmonic analysis
on the infinite symmetric group
leads to  determinantal point processes,
which in many ways are similar to point processes associated with random matrix ensembles of the $\beta=2$ symmetry class.
On the other hand, in addition to ensembles of $\beta=2$ symmetry class, Random Matrix Theory deals with ensembles of $\beta=1$ and $\beta=4$ symmetry classes.
These ensembles (in contrast to those from $\beta=2$ symmetry class) lead to Pfaffian point processes.
In addition to random matrix problems, Pfaffian point processes appear in the theory of random partitions (see Rains \cite{rains}, Borodin
and Rains \cite{borodinrains}, Forrester, Nagao and Rains \cite{forresternagaorains} , Strahov \cite{strahov, strahov2}, Petrov \cite{petrov}), and in statistical mechanics (see, for example, Ferrari \cite{ferrari}).

In this paper we construct and investigate Pfaffian point processes relevant for the harmonic analysis on the infinite symmetric group.
In particular, we present explicit formulae for the  correlation functions, see Theorem \ref{THEOREMMAINRESULT}.

This paper is organized as follows.
Section 2 reviews the construction of a remarkable family of unitary representations $T_{z,\frac{1}{2}}$
associated with the infinite-dimensional  pair $(S(2\infty),H(\infty))$ ($(S(2\infty),H(\infty))$ is a Gelfand pair in the sense of Olshanski \cite{olshanskiGelfandPairs}).
Section 3 gives a spectral representation of the spherical functions of $T_{z,\frac{1}{2}}$, and introduces the spectral measures
$M_{z,\theta=\frac{1}{2}}^{\Spectral}$ (these objects are closely related with the $z$-measures with the Jack parameter $\theta=\frac{1}{2}$).
The spectral measures $M_{z,\theta=\frac{1}{2}}^{\Spectral}$ govern decomposition of $T_{z,\frac{1}{2}}$ into irreducible components,
and the irreducible components are parameterized by points of the Thoma set.

The problem of harmonic analysis under considerations is to describe $M_{z,\theta=\frac{1}{2}}^{\Spectral}$. This is done in
Section 4.  We use the idea proposed by Borodin and Olshanski
 and convert the points of the Thoma set into a point configuration. Then the spectral measure $M_{z,\theta=\frac{1}{2}}^{\Spectral}$ defines a random point processes which can be studied by standard tools of probability theory.
Theorem \ref{THEOREMMAINRESULT} is the main result of the present paper. It states that the point process defined by the (lifted) spectral measure
$\widetilde{M}_{z,\theta=\frac{1}{2}}^{\Spectral}$  is a Pfaffian point processes. Moreover, Theorem \ref{THEOREMMAINRESULT} gives an explicit formula
for the corresponding correlation functions in terms of the classical Whittaker functions.
It is remarkable that the correlation kernel of Theorem \ref{THEOREMMAINRESULT} has the same structure as the matrix Airy kernel arising in the  study of the  symplectic random matrix  ensembles.
This similarity suggests a possibility to study the obtained Pfaffian point process on the same level as Pfaffian point processes of Random Matrix Theory.

Finally, Section 5 contains the sketch of the proof of Theorem \ref{THEOREMMAINRESULT}.\\
\textbf{Acknowledgements.} I am   grateful to Alexei Borodin and Grigori Olshanski for
numerous discussions and many valuable comments at different
stages of this work. It is my pleasure to thank the organizers of the Fall 2010 MSRI Random Matrix Theory program
for the stimulating and encouraging environment they created at the program. 

\section{The representations $T_{z,\frac{1}{2}}$}
In this section we review the construction of family $T_{z,\frac{1}{2}}$ of unitary
representations of the group $S(2\infty)$. These representations
(introduced in Olshanski \cite{olshanskiletter}, Strahov
\cite{strahov1}) are parameterized by points $z\in
\C\setminus\{0\}$, and can be viewed as  analogues of the
generalized regular representations introduced in Kerov,
Olshanski, and Vershik \cite{KOV1, KOV2}.
\subsection{The spaces $ X(n)$ and their projective limit} Let
$S(2n)$ be the permutation group of $2n$ symbols realized as the group of permutations
of the set  $\{-n,\ldots,-1,1,\ldots,n\}$. Let $\breve{t}\in
S(2n)$ be the product of the transpositions $(-n,n),(-n+1,n-1),
\ldots, (-1,1)$. By definition, the group $H(n)$ is the
centralizer of $\breve{t}$ in $S(2n)$,
$$
H(n)=\left\{\sigma\biggl|\sigma\in S(2n), \sigma
\breve{t}\sigma^{-1}=\breve{t}\right\}.
$$
The group $H(n)$ is called the hyperoctahedral group of degree
$n$. It is known that $(S(2n),H(n))$ is a Gelfand pair, see
Macdonald \cite{macdonald}, VII, \S 2.

Set $X(n)=H(n)\setminus S(2n)$, so $X(n)$ is the space of right
cosets of the subgroup $H(n)$ in $S(2n)$. The set $X(n)$ can be
realized as the set of all pairings  of
$\{-n,\ldots,-1,1,\ldots,n\}$ into $n$ unordered pairs. Thus every
element $\breve{x}$ of $X(n)$ may be represented as a collection of
$n$ unordered pairs,
\begin{equation}\label{representationofelement}
\breve{x}\in X(n)\longleftrightarrow
\breve{x}=\biggl\{\{i_1,i_2\},\ldots,\{i_{2n-1},i_{2n}\}\biggr\},
\end{equation}
where $i_1,i_2,\ldots,i_{2n}$  are distinct elements of the set
$\{-n,\ldots,-1,1,\ldots,n\}$.

Given an element $\breve{x}'\in X(n+1)$ we define its derivative
element $\bx\in X(n)$ as follows. Represent $\breve{x}'$ as $n+1$
unordered pairs. If $n+1$ and $-n-1$ are in the same pair, then
$\breve{x}$ is obtained from $\breve{x}'$ by deleting this pair.
Suppose that $n+1$ and $-n-1$ are in different pairs. Then
$\breve{x}'$ can be written as
$$
\breve{x}'=\biggl\{\{i_1,i_2\},\ldots, \{i_m,-n-1\},\ldots,
\{i_k,n+1\},\ldots, \{i_{2n+1},i_{2n+2}\}\biggr\}.
$$
In this case $\breve{x}$ is obtained from $\breve{x}'$ by removing
$-n-1$, $n+1$  from pairs $\{i_m,-n-1\}$ and $\{i_k, n+1\}$
correspondingly, and by replacing these two  pairs, $\{i_m,-n-1\}$
and $\{i_k, n+1\}$, by one pair $\{i_m,i_k\}$.  The map
$\breve{x}'\rightarrow \breve{x}$, denoted by $p_{n,n+1}$, will be
referred to as the canonical projection of $X(n+1)$ onto $X(n)$.

Consider the sequence
$$
X(1)\leftarrow\ldots\leftarrow X(n)\leftarrow
X(n+1)\leftarrow\ldots
$$
of canonical projections, and let
$$
X=\varprojlim X(n)
$$
denote the projective limit of the sets $X(n)$. By definition, the
elements of $X$ are arbitrary sequences $\breve{x}=(\breve{x}_1,
\breve{x}_2,\ldots )$, such that $\breve{x}_n\in X(n)$, and
$p_{n,n+1}(\breve{x}_{n+1})=\breve{x}_n$. The set $X$ is a closed
subset of the compact space of all sequences $(\breve{x}_n)$,
therefore, it is a compact space itself.

In what follows we denote by $p_n$ the projection $X\rightarrow
X(n)$ defined by $p_n(\breve{x})=\breve{x}_n$.

Let $\bx$ be an element of $X(n)$. Then $\bx$ can be identified
with  arrow configurations on  circles. Such  arrow configurations
can be constructed as follows. Once $\bx$ is written as a
collection of $n$ unordered pairs, one can represent $\bx$ as a
union of cycles of the form
\begin{equation}\label{formcycle}
j_1\rightarrow -j_2\rightarrow j_2\rightarrow -j_3\rightarrow
j_3\rightarrow\ldots\rightarrow -j_k \rightarrow j_k \rightarrow
-j_1\rightarrow j_1,
\end{equation}
where $j_1,j_2,\ldots ,j_k$ are distinct integers from the set
$\{-n,\ldots ,n\}$.

 For example, take
\begin{equation}\label{element}
\breve{x}=\biggl\{\{1,3\},\{-2,5\}, \{2,-1\},
\{-3,-5\},\{4,-6\},\{-4,6\}\biggr\}.
\end{equation}
Then $\bx\in X(3)$, and it is possible to think about $\bx$ as a
union of two cycles, namely
\begin{equation}\label{circle1}
1\rightarrow 3\rightarrow-3\rightarrow-5\rightarrow
5\rightarrow-2\rightarrow 2\rightarrow-1\rightarrow 1, \nonumber
\end{equation}
and
\begin{equation}\label{circle2}
4\rightarrow-6\rightarrow 6\rightarrow -4\rightarrow 4. \nonumber
\end{equation}
Cycle  (\ref{formcycle}) can be represented as a circle with
attached arrows. Namely,  we put on a circle points labeled by
$|j_1|$, $|j_2|$,$\ldots$, $|j_k|$, and attach arrows to these
points according to the following rules. The arrow attached to
$|j_1|$ is directed clockwise. If the next integer in the cycle
(\ref{formcycle}), $j_2$, has the same sign as $j_1$, then the
direction of the arrow attached to $|j_2|$ is the same as the
direction of the arrow attached to $|j_1|$, i.e. clockwise.
Otherwise, if the sign of $j_2$ is opposite to the sign of $j_1$,
the direction of the arrow attached to $|j_2|$ is opposite to the
direction of the arrow attached to $|j_1|$, i.e. counterclockwise.
Next, if the integer $j_3$ has the same sign as $j_2$, then the
direction of the arrow attached to $|j_3|$ is the same as the
direction of the arrow attached to $|j_2|$, etc. For example,  the
representation  of the element $\bx$ defined by (\ref{element})
in terms of arrow configurations on circles is shown in Fig. 1.
\begin{figure}
%\unitlength=2mm
\begin{picture}(100,150)
%%% firstcircle
\put(-10,100){\circle{200}}
%%%% second circle
\put(100,100){\circle{200}}
%%%%%%%%%%%%%%%poimts first circle
\put(-10,120){\circle*{2}} \put(-10,80){\circle*{2}}
\put(-30,100){\circle*{2}} \put(10,100){\circle*{2}}
%%%%%%%%%%%%%%numbers first circle
\put(-10,124){$1$} \put(-10,69){$5$} \put(-42,100){$2$}
\put(14,100){$3$}
%%%%%%%%%%%%%% vectors first circle
\put(-10,120){\vector(1,0){10}} \put(-10,80){\vector(-1,0){10}}
\put(-30,100){\vector(0,1){10}} \put(10,100){\vector(0,1){10}}
%%%%%%%%%%%%%%%poimts second circle
\put(100,120){\circle*{2}} \put(100,80){\circle*{2}}
%%%%%%%%%%%%%% vectors second circle
\put(100,120){\vector(1,0){10}} \put(100,80){\vector(-1,0){10}}
%%%%%%%%%%%%%%numbers second circle
\put(100,124){$4$} \put(100,69){$6$}
\end{picture}
\caption{The representation of the element
$$
\breve{x}=\biggl\{\{1,3\},\{-2,5\}, \{2,-1\},
\{-3,-5\},\{4,-6\},\{-4,6\}\biggr\}
$$
in terms of  arrow configurations on circles. The first circle
(from the left) represents  cycle $1\rightarrow
3\rightarrow-3\rightarrow-5\rightarrow 5\rightarrow-2\rightarrow
2\rightarrow-1\rightarrow 1$, and the second circle represents
cycle $4\rightarrow-6\rightarrow 6\rightarrow -4\rightarrow 4$.}
\end{figure}
\subsection{The $t$-measures on $X$}
\begin{defn}\label{DefinitionEwensMeasures} For $t>0$ we set
$$
\mu_t^{(n)}(\breve{x})=\frac{t^{[\breve{x}]_n}}{t(t+2)\ldots
(t+2n-2)},
$$
where $\breve{x}\in X(n)$, and $[\breve{x}]_n$ denotes the number
of cycles in $\breve{x}$, or the number of circles in the
representation of $\bx$ in terms of arrow configurations.
\end{defn}
\begin{rem}
The measures $\mu_t^{(n)}$ on the spaces $X(n)$ are natural
analogues of the Ewens measures on the group $S(n)$ described in
Kerov, Olshanski and Vershik \cite{KOV2}.
\end{rem}
\begin{prop}\label{PROPOSITION4.2}
a) We have
\begin{equation}\label{normtequation}
\sum\limits_{\breve{x}\in X(n)}\mu_t^{(n)}(\breve{x})=1.
\end{equation}
Thus $\mu_t^{(n)}(\breve{x})$ can be understood as a probability
measure
on $X(n)$. \\
b) Given $t>0$, the canonical projections $p_{n,n+1}$ preserve the
measures $\mu_t^{(n)}(\breve{x})$, which means that the condition
\begin{equation}\label{mutnproperty}
\mu_t^{(n+1)}\biggl(\{\breve{x}'\;\vert\; \breve{x}'\in
X(n+1),p_{n,n+1}(\breve{x}')=\breve{x}\}\biggr)=\mu_t^{(n)}(\breve{x})
\end{equation}
is satisfied for each $ \breve{x}\in X(n)$.
\end{prop}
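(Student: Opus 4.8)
The plan is to prove part b) first, since part a) will then follow from it by a short induction on $n$. The heart of the argument is an analysis of the fiber $p_{n,n+1}^{-1}(\breve{x})=\{\breve{x}'\in X(n+1):p_{n,n+1}(\breve{x}')=\breve{x}\}$ over a fixed $\breve{x}\in X(n)$, together with a precise accounting of how the cycle number $[\,\cdot\,]$ behaves along the projection. Reading the definition of $p_{n,n+1}$ backwards, I would describe each $\breve{x}'$ in the fiber as obtained from $\breve{x}$ by inserting the two new symbols $n+1$ and $-n-1$, and I claim there are exactly two mechanisms for doing so: either one adjoins the single new pair $\{n+1,-n-1\}$ and leaves every other pair of $\breve{x}$ untouched, or one selects an existing pair of $\breve{x}$ and ``splits'' it, reversing the merging step $\{i_m,-n-1\},\{i_k,n+1\}\mapsto\{i_m,i_k\}$.

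First I would treat the two mechanisms separately. In the first, the adjoined pair $\{n+1,-n-1\}$ forms a circle carrying the single point $n+1$ in the arrow picture of \eqref{formcycle}, so it contributes one extra cycle and yields $[\breve{x}']_{n+1}=[\breve{x}]_n+1$; this produces exactly one element of the fiber. In the second, I would observe that since $n+1$ and $-n-1$ have the same absolute value they necessarily lie on the same circle of $\breve{x}'$, so inserting them into a chosen pair merely lengthens the circle through that pair by the point $n+1$, neither creating nor destroying a circle; hence $[\breve{x}']_{n+1}=[\breve{x}]_n$. Each of the $n$ pairs of $\breve{x}$ can be split in exactly two ways, according to whether $n+1$ or $-n-1$ is attached to $i_m$, giving $2n$ distinct fiber elements. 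As a consistency check the fiber then has $2n+1$ elements, matching $|X(n+1)|/|X(n)|=(2n+1)!!/(2n-1)!!=2n+1$.

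With this decomposition the verification of b) becomes a one-line computation. Writing $D_n=t(t+2)\cdots(t+2n-2)$ for the denominator and using $D_{n+1}=D_n\,(t+2n)$,
$$
\mu_t^{(n+1)}\bigl(p_{n,n+1}^{-1}(\breve{x})\bigr)
=\frac{t^{[\breve{x}]_n+1}+2n\,t^{[\breve{x}]_n}}{D_{n+1}}
=\frac{(t+2n)\,t^{[\breve{x}]_n}}{D_n\,(t+2n)}
=\frac{t^{[\breve{x}]_n}}{D_n}
=\mu_t^{(n)}(\breve{x}),
$$
which is exactly \eqref{mutnproperty}. For part a) I would then induct on $n$: the base case $X(1)=\{\{-1,1\}\}$ gives $\mu_t^{(1)}=t/t=1$, and the inductive step follows by summing b) over $\breve{x}\in X(n)$, since the fibers partition $X(n+1)$. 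Equivalently, the same count yields the recursion $P_{n+1}(t)=(t+2n)P_n(t)$ for the numerator $P_n(t)=\sum_{\breve{x}\in X(n)}t^{[\breve{x}]_n}$, whence $P_n(t)=D_n$ and \eqref{normtequation} drops out directly.

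I expect the only genuine obstacle to lie in the combinatorial bookkeeping of the second mechanism: one must check carefully, using the sign rules that govern the arrow directions in \eqref{formcycle}, that reversing the merge truly preserves the alternating pairing/$\breve{t}$ structure of a cycle (so that the cycle count is unchanged rather than a circle being split or two circles joined), and that the two orientations of each split produce genuinely distinct pairings. Once this local cycle surgery is pinned down, both parts of the proposition are immediate.
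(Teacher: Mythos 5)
Your proof is correct. Note that the paper itself does not prove Proposition \ref{PROPOSITION4.2} at all: it simply cites Strahov \cite{strahov1}, Section 4.1, so there is no in-paper argument to compare against line by line. Your argument is the natural self-contained one (and is the analogue of the standard coherence proof for the Ewens measures in Kerov--Olshanski--Vershik \cite{KOV2}): the fiber of $p_{n,n+1}$ over $\breve{x}\in X(n)$ consists of exactly $2n+1$ elements, one obtained by adjoining the pair $\{n+1,-n-1\}$ (raising the cycle count by $1$, since the new pair is a cycle of its own) and $2n$ obtained by splitting one of the $n$ existing pairs in one of two orientations (leaving the cycle count unchanged, since the insertion only lengthens the cycle through the split pair --- the points $n+1$ and $-n-1$ always lie on the same circle because the $\breve{t}$-edge joins them). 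Your sanity check $|X(n+1)|/|X(n)|=(2n+1)!!/(2n-1)!!=2n+1$ confirms the fiber enumeration is exhaustive, and the identity $t^{[\breve{x}]_n+1}+2n\,t^{[\breve{x}]_n}=(t+2n)\,t^{[\breve{x}]_n}$ then yields \eqref{mutnproperty} immediately; deducing \eqref{normtequation} by induction from the base case $X(1)$, or equivalently via the recursion $P_{n+1}(t)=(t+2n)P_n(t)$, is also sound. The one point you flag as a potential obstacle --- that reversing the merge neither splits a circle nor joins two --- you have in fact already settled by the observation about the $\breve{t}$-edge, so nothing is missing.
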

\begin{proof}
See Strahov \cite{strahov1}, Section 4.1
\end{proof}
It follows from Proposition \ref{PROPOSITION4.2} that for any
given $t>0$, the canonical projection $p_{n-1,n}$ preserves the
measures $\mu_t^{(n)}$. Hence the measure
$$
\mu_t=\varprojlim \mu_t^{(n)}
$$
on $X$ is correctly defined, and it is a probability measure.

Now we describe the right action of the group $S(2n)$ on the space
$X(n)$, and then we extend it to the right action of $S(2\infty)$
on $X$.

Let $\breve{x}_n\in X(n)$. Then $\breve{x}_n$ can be written as a
collection of $n$ unordered pairs (equation
(\ref{representationofelement})). Let  $g$ be a permutation from
$S(2n)$,
$$
g:\;\; \left(\begin{array}{ccccc}
  -n & -n+1 & \ldots & n-1 & n \\
  g(-n) & g(-n+1) & \ldots & g(n-1) & g(n) \\
\end{array}\right).
$$
The right action of the group $S(2n)$ on the space $X(n)$ is
defined by
$$
\breve{x}_n\cdot
g=\biggl\{\{g(i_1),g(i_2)\},\{g(i_3),g(i_4)\},\ldots,
\{g(i_{2n-1}),g(i_{2n})\}\biggr\}.
$$
\begin{prop}The canonical projection $p_{n,n+1}$ is equivariant
with respect to the right action of the group $S(2n)$ on the space
$X(n)$, which means
$$
p_{n,n+1}(\breve{x}\cdot g)=p_{n,n+1}(\breve{x})\cdot g,
$$
for all $\breve{x}\in X(n+1)$, and all $g\in S(2n)$.
\end{prop}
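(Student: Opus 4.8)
The plan is to reduce the claimed identity to a single structural fact: when $g\in S(2n)$ is regarded as an element of $S(2n+2)$ acting on $X(n+1)$, it fixes the two ``new'' symbols $n+1$ and $-n-1$. Indeed, under the standard embedding $S(2n)\hookrightarrow S(2n+2)$ one has $g(n+1)=n+1$ and $g(-n-1)=-n-1$, so the right action $\breve{x}\mapsto\breve{x}\cdot g$ permutes only the symbols lying in $\{-n,\dots,n\}$ and leaves $\pm(n+1)$ in place. Since $p_{n,n+1}$ is defined entirely in terms of how $n+1$ and $-n-1$ are distributed among the pairs of $\breve{x}$, I expect the ``apply $g$'' operation and the ``delete/merge $\pm(n+1)$'' surgery to commute, and the proof should be a short case analysis following the two cases of the definition of $p_{n,n+1}$.

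First I would note that applying $g$ cannot change whether $n+1$ and $-n-1$ share a pair: because $g$ fixes both symbols, $\{n+1,-n-1\}$ is a pair of $\breve{x}$ if and only if it is a pair of $\breve{x}\cdot g$. This lets me handle the two cases separately. In the first case write $\breve{x}=\bigl\{\{n+1,-n-1\},\{i_1,i_2\},\dots,\{i_{2n-1},i_{2n}\}\bigr\}$ with all $i_j\in\{-n,\dots,n\}$; then $p_{n,n+1}$ deletes $\{n+1,-n-1\}$, and since $g$ fixes $\pm(n+1)$ the pair $\{n+1,-n-1\}$ survives in $\breve{x}\cdot g$ and is again deleted by $p_{n,n+1}$. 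On both sides the surviving pairs are exactly the images $\{g(i_{2l-1}),g(i_{2l})\}$, so the two collections agree. In the second case $n+1$ and $-n-1$ sit in distinct pairs $\{i_k,n+1\}$ and $\{i_m,-n-1\}$; in $\breve{x}\cdot g$ these become $\{g(i_k),n+1\}$ and $\{g(i_m),-n-1\}$, still distinct pairs carrying $\pm(n+1)$, so $p_{n,n+1}$ merges them into $\{g(i_k),g(i_m)\}$. This is precisely the image under $g$ of the merged pair $\{i_k,i_m\}$ produced by $p_{n,n+1}(\breve{x})$, and the remaining pairs match image by image. Hence $p_{n,n+1}(\breve{x}\cdot g)=p_{n,n+1}(\breve{x})\cdot g$ in both cases.

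The only point requiring genuine care is the bookkeeping with unordered pairs: one must check that the two sides produce the same \emph{set} of pairs, not merely collections of the same size and shape. This is guaranteed because $g$ is a bijection that fixes $\pm(n+1)$ and permutes the remaining symbols, so no two pairs collide and none is lost under the surgery. I therefore do not anticipate any real obstacle beyond verifying that the merge/delete operation on $\pm(n+1)$ commutes with the symbol-wise relabelling by $g$, which the case analysis above makes transparent.
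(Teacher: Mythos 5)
Your proof is correct. Note that the paper itself gives no argument for this proposition --- it simply defers to Strahov \cite{strahov1}, Section 4.2 --- so your write-up supplies the missing verification rather than deviating from an in-paper proof. The argument you give is the natural (and essentially only) one: the key observation that $g$, under the embedding $S(2n)\hookrightarrow S(2n+2)$, fixes both $n+1$ and $-n-1$, so the case distinction in the definition of $p_{n,n+1}$ (whether $\pm(n+1)$ share a pair) is invariant under the action of $g$, after which each of the two cases is checked by matching pairs symbol-wise; your treatment of the merge case, where $\{i_k,n+1\}$ and $\{i_m,-n-1\}$ become $\{g(i_k),n+1\}$ and $\{g(i_m),-n-1\}$ and both sides produce the pair $\{g(i_k),g(i_m)\}$, is exactly right, and your closing remark that bijectivity of $g$ guarantees the resulting sets of unordered pairs coincide closes the only loophole. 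No gaps.
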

\begin{proof}
See Strahov \cite{strahov1}, Section 4.2
\end{proof}
Since the canonical projection $p_{n,n+1}$ is equivariant, the
right action of $S(2n)$ on $X(n)$ can be extended to the right
action of $S(2\infty)$ on $X$. For $n=1,2,\ldots $ we identify
$S(2n)$ with the subgroup of permutations $g\in S(2n+2)$
preserving the elements $-n-1$ and $n+1$ of the set
$\{-n-1,-n,\ldots,-1,1,\ldots,n,n+1\}$, i.e.
$$
S(2n)=\biggl\{g\biggl|g\in S(2n+2),\; g(-n-1)=-n-1,\; \mbox{and}\;
g(n+1)=n+1 \biggr\}.
$$

Let $S(2)\subset S(4)\subset S(6)\ldots $ be the collection of
such subgroups. Set
$$
S(2\infty)=\bigcup_{n=1}^{\infty}S(2n).
$$
Thus $S(2\infty)$ is the inductive limit of subgroups $S(2n)$,
$$
S(2\infty)=\varinjlim S(2n).
$$
If $\breve{x}=(\breve{x}_1,\breve{x}_2,\ldots )\in X$, and $g\in
S(2\infty)$, then the right action of $S(2\infty)$ on
$X=\varprojlim X(n)$,
$$X\times
S(2\infty)\longrightarrow X,
$$ is defined as $\breve{x}\cdot g=\check{y}$, where
$\breve{x}_n\cdot g=\breve{y}_n$ for all $n$ so large that $g\in
S(2\infty)$ lies in $S(2n)$.
\begin{prop}\label{Proposition1.444444}We have
$$
p_n(\breve{x}\cdot g)=p_n(\breve{x})\cdot g
$$
for all $\breve{x}\in X$, $g\in S(2\infty)$, and for all $n$ so
large that $g\in S(2n)$.
\end{prop}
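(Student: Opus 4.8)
The plan is to verify that the coordinatewise recipe defining $\breve{x}\cdot g$ genuinely produces an element of the projective limit $X$, after which the asserted identity is just a reading-off of the definition. The only substantive point is that acting by $g$ on each coordinate yields a sequence compatible with the canonical projections; this is supplied precisely by the equivariance established in the preceding proposition.

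First I would fix $\breve{x}=(\breve{x}_1,\breve{x}_2,\ldots)\in X$ and $g\in S(2\infty)$, and choose $N$ so large that $g\in S(2N)$. Since $S(2N)\subset S(2n)$ for every $n\geq N$, the element $g$ lies in $S(2n)$ for all $n\geq N$, so the right action $\breve{x}_n\cdot g$ of $S(2n)$ on $X(n)$ is defined for each such $n$; set $\breve{y}_n=\breve{x}_n\cdot g$. To see that the tail $(\breve{y}_N,\breve{y}_{N+1},\ldots)$ is compatible, I would apply the equivariance of $p_{n,n+1}$: for $n\geq N$,
$$
p_{n,n+1}(\breve{y}_{n+1})=p_{n,n+1}(\breve{x}_{n+1}\cdot g)=p_{n,n+1}(\breve{x}_{n+1})\cdot g=\breve{x}_n\cdot g=\breve{y}_n,
$$
the third equality using $p_{n,n+1}(\breve{x}_{n+1})=\breve{x}_n$, which holds because $\breve{x}\in X$. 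A compatible tail determines a unique element of $X$, its lower coordinates being the images under the composite projections, so the $\breve{y}_n$ assemble into $\breve{y}=(\breve{y}_1,\breve{y}_2,\ldots)\in X$, which is by definition $\breve{x}\cdot g$.

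Finally, for every $n\geq N$ one has $p_n(\breve{x}\cdot g)=\breve{y}_n=\breve{x}_n\cdot g=p_n(\breve{x})\cdot g$, which is exactly the assertion. I expect the compatibility check to be the only genuine obstacle, and it dissolves once the equivariance of the canonical projections is invoked; the remainder is routine bookkeeping about the projective limit together with the observation that for $n\geq N$ the projection $p_n$ returns precisely the coordinate on which $g$ acts.
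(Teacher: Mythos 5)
Your proof is correct and follows essentially the same route as the paper, whose proof simply states that the claim follows immediately from the definitions of $p_n$ and of the right action of $S(2\infty)$ on $X$. You have merely made explicit the well-definedness check (via the equivariance of $p_{n,n+1}$) that the paper's definition of the action already presupposes; this is careful bookkeeping, not a different argument.
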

\begin{proof}The claim follows immediately from the very
definition of the projection $p_n$, and of the right action of
$S(2\infty)$ on $X$.
\end{proof}
\begin{prop}
For any $\breve{x}=(\breve{x}_n)\in X$, and $g\in S(2\infty)$, the
quantity
$$
c(\breve{x};g)=[p_n(\breve{x}\cdot
g)]_n-[p_n(\breve{x})]_n=[p_n(\breve{x})\cdot g]_n-[p_n(\bx)]_n
$$
does not depend on $n$ provided that $n$ is so large that $g\in
S(2n)$.
\end{prop}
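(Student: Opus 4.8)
The first equality is nothing but Proposition \ref{Proposition1.444444}: since $p_n(\breve{x}\cdot g)=p_n(\breve{x})\cdot g$ as elements of $X(n)$, the two configurations have the same number of circles. It therefore remains to prove that $c_n:=[p_n(\breve{x})\cdot g]_n-[p_n(\breve{x})]_n$ is independent of $n$ once $g\in S(2n)$. The plan is to fix $N$ with $g\in S(2N)$; then for every $n\geq N$ we have $g\in S(2n)$ and, by the embedding $S(2n)\subset S(2n+2)$, the permutation $g$ fixes both $n+1$ and $-n-1$. I would prove $c_{n+1}=c_n$ for all such $n$, writing $\breve{z}=p_{n+1}(\breve{x})\in X(n+1)$ and $\breve{w}=p_{n,n+1}(\breve{z})=p_n(\breve{x})\in X(n)$, and comparing the circle counts before and after the projection $p_{n,n+1}$.

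The crucial ingredient is a lemma describing how $p_{n,n+1}$ changes the number of circles. The cleanest way to see this is to view a configuration in $X(n+1)$ as the union of two perfect matchings on $\{-n-1,\dots,-1,1,\dots,n+1\}$: the matching $\pi$ given by the unordered pairs, and the matching $a\mapsto -a$ coming from $\breve{t}$. The circles are exactly the alternating cycles of this union. In these terms the definition of $p_{n,n+1}$ says: if $n+1$ and $-n-1$ lie in the same pair of $\breve{z}$, then $\{n+1,-n-1\}$ is itself a two-element alternating cycle (one circle), and deleting it lowers the circle count by exactly $1$; if $n+1$ and $-n-1$ lie in different pairs $\{i_m,-n-1\}$ and $\{i_k,n+1\}$, they are joined by the $\breve{t}$-edge and hence lie on a common alternating cycle, and replacing the path $i_k\!-\!(n+1)\!-\!(-n-1)\!-\!i_m$ by the single pair $\{i_k,i_m\}$ merely shortens that cycle without splitting it, so the circle count is unchanged. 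Thus, setting $\delta(\breve{z})=1$ if $n+1,-n-1$ are paired in $\breve{z}$ and $\delta(\breve{z})=0$ otherwise, one gets $[\breve{z}]_{n+1}=[\breve{w}]_n+\delta(\breve{z})$.

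Granting this lemma, I would finish as follows. By the equivariance of the canonical projection $p_{n,n+1}$ proved above, $p_{n,n+1}(\breve{z}\cdot g)=\breve{w}\cdot g$, so the lemma applied to $\breve{z}\cdot g$ gives $[\breve{z}\cdot g]_{n+1}=[\breve{w}\cdot g]_n+\delta(\breve{z}\cdot g)$. Subtracting the two identities yields $c_{n+1}=c_n+\delta(\breve{z}\cdot g)-\delta(\breve{z})$. Finally, because $g$ fixes $n+1$ and $-n-1$ and acts by a bijection on the remaining symbols, the pair of $\breve{z}\cdot g$ containing $n+1$ is the $g$-image of the pair of $\breve{z}$ containing $n+1$; in particular $n+1$ and $-n-1$ are paired in $\breve{z}\cdot g$ if and only if they are paired in $\breve{z}$. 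Hence $\delta(\breve{z}\cdot g)=\delta(\breve{z})$ and $c_{n+1}=c_n$, which gives the asserted independence of $n$.

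The main obstacle is the lemma of the second paragraph, and specifically its second case: one must check carefully that contracting the path through $\pm(n+1)$ keeps the alternating structure intact and does not break the cycle into two, so that the circle count is genuinely preserved. Everything else—the reduction to consecutive indices, the use of equivariance, and the invariance of $\delta$ under the action of a $g$ fixing $\pm(n+1)$—is routine bookkeeping once this local analysis of the arrow configurations is in place.
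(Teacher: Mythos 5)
Your proposal is correct and complete. Note that the paper itself gives no internal proof of this proposition --- it simply defers to Strahov \cite{strahov1}, Section 4.3 --- so there is no in-paper argument to compare against; your write-up supplies exactly the kind of self-contained proof the citation points to. The structure you chose is the natural one: the first equality is indeed immediate from the equivariance $p_n(\breve{x}\cdot g)=p_n(\breve{x})\cdot g$; the heart of the matter is your lemma $[\breve{z}]_{n+1}=[\breve{w}]_n+\delta(\breve{z})$, and your two-matching model (circles as alternating cycles of the pairing $\pi$ together with the involution $a\mapsto -a$) makes both cases of the projection transparent. In the first case the pair $\{n+1,-n-1\}$ together with the negation edge forms its own circle, so deletion drops the count by $1$; in the second case, since $n+1$ and $-n-1$ are always joined by a negation edge they lie on one common cycle, so contracting the pair--negation--pair path $i_k\,{-}\,(n+1)\,{-}\,(-n-1)\,{-}\,i_m$ to the single pair edge $\{i_k,i_m\}$ can neither split nor merge cycles, and the alternating structure survives (this holds even in the degenerate case $i_m=-i_k$, where the circle through $|i_k|$ and $n+1$ contracts to the one-point circle on $|i_k|$). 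Your telescoping step $c_{n+1}=c_n+\delta(\breve{z}\cdot g)-\delta(\breve{z})$ then closes correctly: because the action sends the pair $\{i,j\}$ to $\{g(i),g(j)\}$ and $g$ fixes $\pm(n+1)$, the element $n+1$ lies in the $g$-image of its own pair, so $\delta$ is invariant and $c_n$ is constant on the upward-closed set of $n$ with $g\in S(2n)$. I see no gap.
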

\begin{proof} See Strahov \cite{strahov1}, Section 4.3.
\end{proof}
\begin{prop} Each of measures $\mu_t$, $0<t<\infty$, is
quasiinvariant with respect to the action of $S(2\infty)$ on the
space $X=\varprojlim X(n)$. More precisely,
$$
\frac{\mu_t(d\bx\cdot g)}{\mu_t(d\bx)}=t^{c(\bx;g)};\;\;\bx\in
X,\; g\in S(2\infty),
$$
where $c(\bx;g)$ is the fundamental cocycle.
\end{prop}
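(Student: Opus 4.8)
The plan is to exploit the projective limit structure and reduce everything to the finite sets $X(n)$, where the assertion becomes an immediate consequence of the explicit form of $\mu_t^{(n)}$. Recall that $\mu_t=\varprojlim\mu_t^{(n)}$ is the unique probability measure on the compact space $X$ whose value on each cylinder set $p_n^{-1}(A)$, $A\subseteq X(n)$, equals $\mu_t^{(n)}(A)$; the consistency of this prescription across levels is exactly Proposition \ref{PROPOSITION4.2}(b). Fix $g\in S(2\infty)$ and choose $N$ with $g\in S(2N)$. By Proposition \ref{Proposition1.444444} the homeomorphism $T_g\colon\bx\mapsto\bx\cdot g$ of $X$ satisfies $p_n\circ T_g=T_g\circ p_n$ for every $n\geq N$, so $T_g$ carries $p_n^{-1}(A)$ onto $p_n^{-1}(A\cdot g)$, where $A\cdot g$ is the image of $A$ under the (bijective) finite-level action of $g$ on $X(n)$.

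First I would record the finite-level identity. On $X(n)$ with $n\geq N$ the right action of $g$ is a bijection, and from $\mu_t^{(n)}(\bx)=t^{[\bx]_n}/\bigl(t(t+2)\cdots(t+2n-2)\bigr)$ the normalizing constant cancels in the ratio of point masses, giving $\mu_t^{(n)}(\bx\cdot g)/\mu_t^{(n)}(\bx)=t^{[\bx\cdot g]_n-[\bx]_n}$. By the definition of the fundamental cocycle, and by the Proposition asserting that $c(\bx;g)=[p_n(\bx)\cdot g]_n-[p_n(\bx)]_n$ is independent of $n$, this exponent equals $c(\bx;g)$ for every $n\geq N$. Thus the density is already the prescribed power of $t$ at each finite level, \emph{with the same exponent at every level}; this uniformity in $n$ is the crucial feature that lets the formula survive the passage to the limit.

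The remaining, and genuinely delicate, point is that in the projective limit $\mu_t$ may be non-atomic, so the finite ratio of point masses must be reinterpreted as a bona fide Radon--Nikodym derivative. Here one uses that $c(\,\cdot\,;g)$ factors through $p_N$, hence through $p_n$ for all $n\geq N$, and is therefore locally constant (continuous) on $X$, so that $\bx\mapsto t^{c(\bx;g)}$ is a bounded cylinder function. I would then verify the measure identity on cylinder sets: for $A\subseteq X(n)$ with $n\geq N$,
\[
\mu_t\bigl((p_n^{-1}A)\cdot g\bigr)=\mu_t^{(n)}(A\cdot g)=\sum_{\bx\in A}t^{c(\bx;g)}\,\mu_t^{(n)}(\bx)=\int_{p_n^{-1}A}t^{c(\bx;g)}\,d\mu_t(\bx),
\]
where the first equality is the $T_g$--compatibility above, the second reindexes the sum by $\by=\bx\cdot g$ and applies the finite-level density, and the last uses that $c(\,\cdot\,;g)$ is constant on each level-$n$ cylinder. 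Both $B\mapsto\mu_t(B\cdot g)$ and $B\mapsto\int_B t^{c(\bx;g)}\,d\mu_t$ are finite Borel measures in $B$, and they agree on the algebra of cylinder sets, which generates the Borel $\sigma$-algebra of the compact space $X$; hence they coincide on all Borel sets. This is precisely the relation $\mu_t(d\bx\cdot g)=t^{c(\bx;g)}\,\mu_t(d\bx)$, establishing both quasiinvariance and the explicit density.

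The hard part is therefore not any single computation but the bookkeeping that legitimizes the limit: one must be sure the finite-level densities are mutually consistent under the projections $p_{n,n+1}$, and this is exactly what the $n$-independence of $c(\bx;g)$ supplies (conceptually, the cocycle identity $c(\bx;g_1g_2)=c(\bx;g_1)+c(\bx\cdot g_1;g_2)$ encodes the chain rule for the resulting Radon--Nikodym derivatives, though for this single statement only the density formula is needed). Granting that consistency, quasiinvariance and the explicit cocycle follow at once.
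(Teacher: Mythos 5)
Your proof is correct and follows essentially the same route as the source: the paper itself gives no argument for this proposition, deferring to Strahov \cite{strahov1}, Section 4.4, where quasiinvariance is established exactly by this reduction to the finite levels $X(n)$ --- the explicit point-mass density $t^{[\bx\cdot g]_n-[\bx]_n}$ for $\mu_t^{(n)}$, the $n$-independence of the exponent (the fundamental cocycle), and the equivariance of the projections --- followed by extension from cylinder sets to all Borel sets. Your bookkeeping (the cocycle factoring through $p_n$ for $n\geq N$, hence being a bounded cylinder function, and the agreement of the two finite Borel measures on the generating cylinder algebra) is precisely the content the citation outsources, and you handle it correctly.
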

\begin{proof}
See Strahov \cite{strahov1}, Section 4.4
\end{proof}

\subsection{Definition of $T_{z,\frac{1}{2}}$}
 Let $(\X,\Sigma,\mu)$ be
a measurable space. Let $G$ be a group which acts on $\X$ from the
right, and preserves the Borel structure. Assume that the measure
$\mu$ is quasiinvariant, i.e. the condition
$$
d\mu(\bx\cdot g)=\delta(\bx;g)d\mu(\bx)
$$
is satisfied for some nonnegative $\mu$-integrable function
$\delta(\bx;g)$ on $\X$, and for every $g$, $g\in G$. Set
\begin{equation}\label{3.1-3.1}
\left(T(g) f\right)(\bx)=\tau(\bx;g)f(\bx\cdot g),\; f\in
L^{2}(\X,\mu),
\end{equation}
where $|\tau(\bx;g)|^2=\delta(\bx;g)$. If
$$\tau(\bx;g_1g_2)=\tau(\bx\cdot
g_1;g_2)\tau(\bx;g_1),\; \bx\in\X, g_1,g_2\in G,
$$
then equation (\ref{3.1-3.1}) defines a unitary representation $T$
of $G$ acting in the Hilbert space $L^2(\X;\mu)$. The function
$\tau(\bx;g)$ is called a multiplicative cocycle.

Let $z\in\C$ be a nonzero complex number. We apply the general
construction described above for the space $\X=X$, the group
$G=S(2\infty)$, the measure $\mu=\mu_t$ (where $t=|z|^2$), and the
cocycle $\tau(\bx;g)=z^{c(\bx;g)}$. In this way we get a unitary
representation of $S(2\infty)$, $T_{z,\frac{1}{2}}$, acting in the
Hilbert space $L^2(X,\mu_t)$ according to the formula
$$
\left(T_{z,\frac{1}{2}}(g)f\right)(\bx)=z^{c(\bx;g)}f(\bx\cdot
g),\; f\in L^2(X,\mu_t),\;\bx\in X,\; g\in S(2\infty).
$$
This defines a family of unitary representations   of $S(2\infty)$.
\subsection{Spherical functions}
Let $(G,K)$ be a Gelfand pair, and let $T$ be a unitary
representation of $G$ acting in the Hilbert space $H(T)$. Assume
that $\xi$ is a unit vector in $H(T)$ such that $\xi$ is
$K$-invariant, and such that the span of vectors of the form
$T(g)\xi$ (where $g\in G$) is dense in $H(T)$. In this case $\xi$
is called the spherical vector, and the matrix coefficient
$(T(g)\xi,\xi)$ is called the spherical function of the
representation $T$. Two spherical representations are equivalent
if and only if their spherical functions coincide.

The representation $T_{z,\frac{1}{2}}$ is realized in the Hilbert
space $L^2(X,\mu_t)$, where $t=|z|^2$. Let $\mathbf{1}$ denote the
function on $X$ identically equal to $1$. It can be viewed as an
element of $L^2(X,\mu_t)$, and as a spherical vector of
$T_{z,\frac{1}{2}}$. Set
$$
\varphi_z(g)=\left(T_{z,\frac{1}{2}}(g)\mathbf{1},\mathbf{1}\right),\;\;g\in
S(2\infty).
$$
Thus $\varphi_z$ is the spherical function of $T_{z,\frac{1}{2}}$.

\section{Spectral representation of the spherical function  of
$T_{z,\frac{1}{2}}$}
\subsection{The $z$-measures on partitions with the general parameter $\theta>0$}
Denote the set of Young diagrams with
$n$ boxes   by $\Y_n$.
Let $M_{z,\bar z,\theta}^{(n)}$ be a probability measure on
$\Y_n$ defined by
\begin{equation}\label{EquationVer4zmeasuren}
M_{z,\bar z,\theta}^{(n)}=\frac{n!(z)_{\lambda,\theta}(\bar z)_{\lambda,\theta}}{(\frac{z\bar z}{\theta})_nH(\lambda,\theta)H'(\lambda,\theta)},
\end{equation}
where $n=1,2,\ldots $, and where we use the following notation
\begin{itemize}
    \item $z\in\C$ and $\theta>0$ are parameters.
    \item $(a)_n$ stands for the Pochhammer symbol,
    $$
    (a)_n=a(a+1)\ldots (a+n-1)=\frac{\Gamma(a+n)}{\Gamma(a)}.
    $$
    \item
    $(z)_{\lambda,\theta}$ is a multidemensional analogue of the
    Pochhammer symbol defined by
    $$
    (z)_{\lambda,\theta}=\prod\limits_{(i,j)\in\lambda}(z+(j-1)-(i-1)\theta)
    =\prod\limits_{i=1}^{l(\lambda)}(z-(i-1)\theta)_{\lambda_i}.
    $$
     Here $(i,j)\in\lambda$ stands for the box in the $i$th row
     and the $j$th column of the Young diagram $\lambda$, and we
     denote by $l(\lambda)$ the number of nonempty rows in the
     Young diagram $\lambda$.
    \item
    $$
    H(\lambda,\theta)=\prod\limits_{(i,j)\in\lambda}\left((\lambda_i-j)+(\lambda_j'-i)\theta+1\right),
   $$
   $$
     H'(\lambda,\theta)=\prod\limits_{(i,j)\in\lambda}\left((\lambda_i-j)+(\lambda_j'-i)\theta+\theta\right),
   $$
      where $\lambda'$ denotes the transposed diagram.
\end{itemize}

\begin{prop}\label{PropositionMSymmetries}
We have
$$
M_{z,\bar z,\theta}^{(n)}(\lambda)=M_{-z/\theta,-\bar z/\theta,1/\theta}^{(n)}(\lambda').
$$
\end{prop}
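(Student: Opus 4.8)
The plan is to verify the identity by a direct term-by-term computation, tracking how each of the four factors in definition \eqref{EquationVer4zmeasuren} transforms under the substitution $(z,\bar z,\theta)\mapsto(-z/\theta,-\bar z/\theta,1/\theta)$ combined with transposition $\lambda\mapsto\lambda'$. The single combinatorial fact driving everything is that transposition carries a box $(i,j)\in\lambda$ to the box $(j,i)\in\lambda'$, interchanging the roles of rows and columns: if $\mu=\lambda'$ then $\mu_i=\lambda'_i$ and $\mu'_j=\lambda_j$. I will use this bijection to re-index each product taken over $\lambda'$ as a product over $\lambda$.

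First I would treat the generalized Pochhammer symbol. Writing $z'=-z/\theta$, $\theta'=1/\theta$ and re-indexing the product defining $(z')_{\lambda',\theta'}$ over the boxes $(a,b)\in\lambda$ (so that $(i,j)\in\lambda'$ becomes $(b,a)$), each factor becomes $-z/\theta+(a-1)-(b-1)/\theta$; pulling the common $-1/\theta$ out of every factor turns this into $z+(b-1)-(a-1)\theta$, which is exactly the factor of $(z)_{\lambda,\theta}$. Hence
\[
(z')_{\lambda',\theta'}=\left(-\tfrac1\theta\right)^{n}(z)_{\lambda,\theta},
\]
and likewise for $\bar z$. Since $(-1)^{2n}=1$, the numerators satisfy $(z')_{\lambda',\theta'}(\bar z')_{\lambda',\theta'}=\theta^{-2n}(z)_{\lambda,\theta}(\bar z)_{\lambda,\theta}$. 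For the normalizing constant I note that $\tfrac{z'\bar z'}{\theta'}=\tfrac{z\bar z}{\theta}$, so the factor $(z'\bar z'/\theta')_n=(z\bar z/\theta)_n$ is left unchanged.

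Next I would handle the two hook products, where the key phenomenon is that transposition interchanges $H$ and $H'$. Re-indexing $H(\lambda',1/\theta)$ over $(p,q)\in\lambda$, each factor reads $(\lambda'_q-p)+(\lambda_p-q)/\theta+1$; extracting $1/\theta$ from each factor produces $(\lambda_p-q)+(\lambda'_q-p)\theta+\theta$, which is precisely a factor of $H'(\lambda,\theta)$. The same extraction applied to $H'(\lambda',1/\theta)$ returns a factor of $H(\lambda,\theta)$, so that
\[
H(\lambda',\tfrac1\theta)=\theta^{-n}H'(\lambda,\theta),\qquad H'(\lambda',\tfrac1\theta)=\theta^{-n}H(\lambda,\theta),
\]
and consequently $H(\lambda',\theta')H'(\lambda',\theta')=\theta^{-2n}H(\lambda,\theta)H'(\lambda,\theta)$.

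Finally I would assemble the pieces: in the ratio defining $M^{(n)}_{z',\bar z',\theta'}(\lambda')$ the factor $\theta^{-2n}$ coming from the numerator cancels the identical factor coming from the hook products in the denominator, the normalizing constant is already invariant, and $n!$ is untouched; what survives is exactly $M^{(n)}_{z,\bar z,\theta}(\lambda)$. I do not expect a genuine obstacle, since the entire content is the bookkeeping of the transpose bijection together with the uniform extraction of a power of $-1/\theta$ (respectively $1/\theta$) from the Pochhammer and hook products. The one point demanding care is correctly matching the $+1$ versus $+\theta$ additive constants in $H$ and $H'$ after the substitution $\theta\mapsto1/\theta$: it is precisely this that forces the two hook products to exchange roles rather than each being preserved.
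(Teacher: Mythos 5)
Your verification is correct in every detail: the transpose bijection $(i,j)\in\lambda'\leftrightarrow(j,i)\in\lambda$, the extraction of $(-1/\theta)^n$ from each Pochhammer symbol, the invariance of $(z\bar z/\theta)_n$, and the exchange $H(\lambda',1/\theta)=\theta^{-n}H'(\lambda,\theta)$, $H'(\lambda',1/\theta)=\theta^{-n}H(\lambda,\theta)$ all check out, with the powers of $\theta^{-2n}$ cancelling as you say. The paper itself states this proposition without proof (it is a known symmetry of the $z$-measures with Jack parameter), and your direct term-by-term computation is precisely the standard argument one would supply.
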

The probability measures $M_{z,\bar z,\theta}^{(n)}$ are called the $z$-measures
with an arbitrary Jack parameter $\theta>0$.
\subsection{The spectral $z$-measures with the general parameter $\theta>0$}
\begin{defn}
The space $\Omega$ of all pairs $\omega=(\alpha,\beta)$ of weakly
decreasing sequences of non-negative real numbers,
$$
\alpha=(\alpha_1\geq\alpha_2\geq\ldots\geq\alpha_k\geq\ldots\geq
0),\;\;\beta=(\beta_1\geq\beta_2\geq\ldots\geq\beta_k\geq\ldots\geq
0)
$$
such that
$$
\sum\limits_{k=1}^{\infty}\alpha_k+\sum\limits_{k=1}^{\infty}\beta_k\leq
1
$$
is called the Thoma set.
\end{defn}
Let us define an embedding of the algebra $\Lambda$ of symmetric
functions into the algebra of continuous functions on the Thoma
set $\Omega$. Since $\Lambda=\mathbb{C}[p_1,p_2,\ldots]$, where
$p_k=\sum_ix_i^k$ are power sums, it is sufficient to define the images
$\widetilde{p}_k$ of the $p_k$'s. We set
\begin{equation}\label{LambdaEmbedding}
\widetilde{p}_k(\omega|\theta)=\left\{%
\begin{array}{ll}
    \sum\limits_{j=1}^{\infty}\alpha_j^k+(-\theta)^{k-1}\sum\limits_{j=1}^{\infty}\beta_j^k, & k=2,3,\ldots, \\
    1, & k=1. \\
\end{array}%
\right.
\end{equation}
Then the following result holds true (see Borodin and Olshanski
\cite{BorodinCombin}, Section 1):
\begin{thm}\label{TheoremZmeasuresSpectralMeasures}
For any $n=1,2,\ldots$ and any $\lambda\in\Y_n$ we have
$$
M^{(n)}_{z,\bar z,\theta}(\lambda)=\frac{n!}{H(\lambda,\theta)}
\int\limits_{\omega=(\alpha,\beta)\in\Omega}\widetilde{P}_{\lambda}(\omega|\theta)M_{z,\bar z,\theta}^{\Spectral}(d\omega),
$$
where $\widetilde{P}_{\lambda}(\omega|\theta)$ denotes the image
of the Jack symmetric function $P_{\lambda}(x|\theta)$ (with the
parameter $\theta$)
 under the embedding defined by equation
(\ref{LambdaEmbedding}). Here $M_{z,\bar z,\theta}^{\Spectral}$ is an
unique probability measure on $\Omega$.
\end{thm}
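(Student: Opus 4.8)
The plan is to recognize the family $\{M^{(n)}_{z,\bar z,\theta}\}_{n\geq 1}$ as a \emph{coherent system} on the Jack--Young branching graph and to invoke the description of its boundary. On $\Y=\bigsqcup_{n}\Y_n$ put an edge $\lambda\nearrow\Lambda$ whenever $\Lambda$ is obtained from $\lambda$ by adjoining one box, with Jack Pieri multiplicity $\kappa_\theta(\lambda,\Lambda)$ read off from $p_1\,P_\lambda(\cdot|\theta)=\sum_{\Lambda}\kappa_\theta(\lambda,\Lambda)P_\Lambda(\cdot|\theta)$, and let $\dim_\theta\lambda$ denote the corresponding weighted count of paths from the empty diagram to $\lambda$. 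The first thing I would record is the Jack hook-length identity $\dim_\theta\lambda=n!/H(\lambda,\theta)$, so that the prefactor appearing in the statement is exactly the Jack dimension; the asserted formula then reads $M^{(n)}_{z,\bar z,\theta}(\lambda)=\dim_\theta\lambda\cdot\int_\Omega\widetilde P_\lambda(\omega|\theta)\,M^{\Spectral}_{z,\bar z,\theta}(d\omega)$.

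First I would verify coherency directly from the product formula \eqref{EquationVer4zmeasuren}: since the ratio $M^{(n)}_{z,\bar z,\theta}(\lambda)/M^{(n+1)}_{z,\bar z,\theta}(\Lambda)$ factorises into per-box contributions, the identity
$$
M^{(n)}_{z,\bar z,\theta}(\lambda)=\sum_{\Lambda:\,\lambda\nearrow\Lambda}M^{(n+1)}_{z,\bar z,\theta}(\Lambda)\,\frac{\kappa_\theta(\lambda,\Lambda)\,\dim_\theta\lambda}{\dim_\theta\Lambda}
$$
reduces to the single-box Pieri rule. Consequently $\varphi(\lambda):=M^{(n)}_{z,\bar z,\theta}(\lambda)/\dim_\theta\lambda$ is a nonnegative harmonic function on the graph, normalized by $\varphi(\emptyset)=1$, in the sense that $\varphi(\lambda)=\sum_{\Lambda:\,\lambda\nearrow\Lambda}\kappa_\theta(\lambda,\Lambda)\varphi(\Lambda)$. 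Next, applying the embedding \eqref{LambdaEmbedding}, which is an algebra homomorphism with $\widetilde p_1=1$, to the Jack Pieri rule shows that for each fixed $\omega\in\Omega$ the function $\lambda\mapsto\widetilde P_\lambda(\omega|\theta)$ is itself harmonic and normalized; these are the candidate extreme points.

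The representation then follows from the ergodic (Choquet) method together with the Jack analogue of Thoma's theorem, which identifies the minimal boundary of the Jack--Young graph with the Thoma set $\Omega$ and its extreme harmonic functions with the $\widetilde P_\lambda(\omega|\theta)$, $\omega\in\Omega$. Granting this, one obtains a unique probability measure $\nu$ on $\Omega$ with $\varphi(\lambda)=\int_\Omega\widetilde P_\lambda(\omega|\theta)\,\nu(d\omega)$, and multiplying by $\dim_\theta\lambda=n!/H(\lambda,\theta)$ yields the claim with $M^{\Spectral}_{z,\bar z,\theta}:=\nu$. Uniqueness is the soft part: the $\widetilde P_\lambda$ form a point-separating subalgebra of $C(\Omega)$ (it is the image of the whole algebra $\Lambda$ under \eqref{LambdaEmbedding}, and the $\widetilde p_k$ separate points of $\Omega$), hence by Stone--Weierstrass they are dense, so a measure on the compact set $\Omega$ is determined by its $\widetilde P_\lambda$-integrals. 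Taking $\lambda=(1)$, where $\widetilde P_{(1)}=\widetilde p_1=1$ and $n!/H((1),\theta)=1$, forces $\nu(\Omega)=M^{(1)}_{z,\bar z,\theta}((1))=1$, so $\nu$ is indeed a probability measure.

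The main obstacle is precisely the boundary identification invoked above, namely the exhaustion half of the Jack--Thoma classification: proving that there are no extreme nonnegative normalized harmonic functions beyond the $\widetilde P_\lambda(\omega|\theta)$. This is the genuinely analytic ingredient and is not formal; it rests on the asymptotics of normalized Jack polynomials along large diagrams (the Jack generalization of the Vershik--Kerov and Kerov--Okounkov--Olshanski results) together with the description of $\Omega$ as the set of admissible limit row- and column-frequencies $(\alpha,\beta)$. Everything else---coherency, harmonicity, the hook identity for $\dim_\theta$, and uniqueness via Stone--Weierstrass---is either a direct manipulation of the explicit product formula \eqref{EquationVer4zmeasuren} or a standard density argument.
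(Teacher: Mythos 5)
Your proposal is correct and follows essentially the same route as the paper: the paper gives no independent argument but derives the theorem from the boundary classification of the Jack graph due to Kerov, Okounkov and Olshanski \cite{koo}, which is exactly the coherent-system/harmonic-function/Choquet scheme you describe, with the same identification $\dim_\theta\lambda=n!/H(\lambda,\theta)$ and a standard density argument for uniqueness. You correctly isolate the boundary identification (the Jack analogue of Thoma's theorem) as the one genuinely non-formal ingredient, and that is precisely what the cited reference supplies.
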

\begin{rem} 1) For the definition of the Jack symmetric functions $P_{\lambda}(x|\theta)$
see Macdonald \cite{macdonald}, VI, \S 10. Note that our parameter
$\theta$ is inverse to Macdonald's $\alpha=1/\theta$.\\
2) The claim of  Theorem \ref{TheoremZmeasuresSpectralMeasures} is a consequence of a more general
statement proved in Kerov, Okounkov, Olshanski \cite{koo}.
\end{rem}
\subsection{A formula for the spherical function}
Let $\varphi_z$ be the spherical function of $T_{z,\frac{1}{2}}$.
In what follows we describe the expansion of $\varphi_z$ in terms
of the zonal spherical functions of the Gelfand pair $(S(2n),
H(n))$. The fact that $(S(2n),H(n))$ is a Gelfand pair implies
that there is an orthogonal basis $\{w^{\lambda}\}$ in
$C(S(2n),H(n))$ whose elements, $w^{\lambda}$, are the zonal
spherical functions of $(S(2n),H(n))$. The elements $w^{\lambda}$
are parameterized by Young diagrams with $n$ boxes, and are
defined by
$$
w^{\lambda}(g)=\frac{1}{|H(n)|}\sum\limits_{h\in
H(n)}\chi^{2\lambda}(gh),
$$
see Macdonald \cite{macdonald}, Sections VII.1 and VII.2. Here $|H(n)|$ is the number of elements in the hyperoctahedral group of degree $n$,
and
$\chi^{2\lambda}$ is the character of the irreducible
$S(2n)$-module corresponding to
$2\lambda=(2\lambda_1,2\lambda_2,\ldots )$.
\begin{prop}\label{RestrictionSphericalFunctions} Denote by $\varphi_z$ the spherical function of
$T_{z,\frac{1}{2}}$. We have
\begin{equation}\label{4.3-4.4}
\varphi_z|_{S(2n)}(g)=\sum\limits_{|\lambda|=n}M_{z,\bar z,
\theta=\frac{1}{2}}^{(n)}(\lambda)w^{\lambda}(g),\;\; g\in S(2n),
\end{equation}
where $M_{z,\bar z, \theta=\frac{1}{2}}^{(n)}(\lambda) $ is the
$z$-measure with the Jack parameter $\theta=1/2$.
\end{prop}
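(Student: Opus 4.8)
The plan is to unwind the definition of the spherical function into a finite combinatorial sum over $X(n)$ and then to expand that sum in the orthogonal basis $\{w^\lambda\}$ of zonal spherical functions, reading off the coefficients from the known evaluation of zonal polynomials. Since $\mathbf 1$ is the spherical vector and $T_{z,\frac12}$ acts by $(T_{z,\frac12}(g)f)(\bx)=z^{c(\bx;g)}f(\bx\cdot g)$, we have
$$
\varphi_z(g)=\left(T_{z,\frac12}(g)\mathbf 1,\mathbf 1\right)=\int_X z^{c(\bx;g)}\,d\mu_t(\bx).
$$
For $g\in S(2n)$ the cocycle $c(\bx;g)=[p_n(\bx)\cdot g]_n-[p_n(\bx)]_n$ depends only on $\bx_n=p_n(\bx)$, and by Proposition \ref{PROPOSITION4.2} the projection $p_n$ pushes $\mu_t$ forward to $\mu_t^{(n)}$. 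Inserting Definition \ref{DefinitionEwensMeasures} with $t=z\bar z$ collapses the integral to
$$
\varphi_z(g)=\frac{1}{t(t+2)\cdots(t+2n-2)}\sum_{\bx_n\in X(n)}z^{[\bx_n\cdot g]_n}\,\bar z^{[\bx_n]_n}.
$$

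Next I would transfer this sum to the group $S(2n)$. Identifying $X(n)=H(n)\backslash S(2n)$ and using that the number of circles $[\bx_n]_n$ equals the number of parts of the coset type of the corresponding permutation — the cycles (\ref{formcycle}) are precisely the even cycles in the superposition of the pairing with $\breve{t}$ (Macdonald VII.2) — the function $F(\sigma):=z^{[H(n)\sigma]_n}$ is a well-defined bi-$H(n)$-invariant function on $S(2n)$. Since each right coset has $|H(n)|$ representatives and every factor above is constant on cosets, the sum equals $\frac{1}{|H(n)|}\sum_{\sigma\in S(2n)}F(\sigma g)\,\overline{F(\sigma)}$. Expanding $F=\sum_{|\lambda|=n}c_\lambda(z)\,w^\lambda$ in the orthogonal basis of zonal spherical functions (the $w^\lambda$ are real), and using the orthogonality relation
$$
\sum_{\sigma\in S(2n)}w^\lambda(\sigma g)\,w^\mu(\sigma)=\delta_{\lambda\mu}\,\frac{(2n)!}{\dim(2\lambda)}\,w^\lambda(g),\qquad \dim(2\lambda)=\chi^{2\lambda}(e),
$$
kills all cross terms and yields
$$
\varphi_z|_{S(2n)}=\frac{(2n)!}{|H(n)|\,t(t+2)\cdots(t+2n-2)}\sum_{|\lambda|=n}\frac{|c_\lambda(z)|^2}{\dim(2\lambda)}\,w^\lambda.
$$

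It then remains to compute $c_\lambda(z)$ and to match the resulting coefficient of $w^\lambda$ with the $z$-measure (\ref{EquationVer4zmeasuren}). By orthogonality $c_\lambda(z)=\frac{\dim(2\lambda)}{(2n)!}\sum_{\sigma\in S(2n)}z^{[\sigma]_n}w^\lambda(\sigma)=\frac{\dim(2\lambda)}{(2n)!}\sum_{\rho\vdash n}|C_\rho|\,z^{\ell(\rho)}\,\omega^\lambda_\rho$, where $C_\rho$ is the set of elements of coset type $\rho$ and $\omega^\lambda_\rho$ is the value of $w^\lambda$ there. The main obstacle is this final evaluation: the generating sum $\sum_{\rho}|C_\rho|\,z^{\ell(\rho)}\,\omega^\lambda_\rho$ must be put in closed form, and this is exactly where the theory of zonal polynomials (Jack polynomials with $\theta=\frac12$) is needed — it is a binomial/principal-specialization identity expressing this sum through the multidimensional Pochhammer symbol $(z)_{\lambda,\frac12}$ together with the hook products $H(\lambda,\frac12)$ and $H'(\lambda,\frac12)$. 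Once $c_\lambda(z)$ is known, $|c_\lambda(z)|^2$ carries the factor $(z)_{\lambda,\frac12}(\bar z)_{\lambda,\frac12}$; substituting back, and simplifying the constants with $|H(n)|=2^n n!$, reproduces $M^{(n)}_{z,\bar z,\theta=\frac12}(\lambda)$ exactly as in (\ref{EquationVer4zmeasuren}). Everything outside this zonal-polynomial computation is routine cocycle bookkeeping together with the standard orthogonality of spherical functions.
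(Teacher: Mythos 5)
Your skeleton is the natural direct argument (the paper itself offers no proof here, only a citation to \cite{strahov1}, Proposition 6.3), and its first four steps are correct: the identity $\varphi_z(g)=\int_X z^{c(\bx;g)}\mu_t(d\bx)$, its collapse via Proposition \ref{PROPOSITION4.2} to $\bigl(t(t+2)\cdots(t+2n-2)\bigr)^{-1}\sum_{\bx\in X(n)}z^{[\bx\cdot g]_n}\bar z^{[\bx]_n}$, the transfer to the bi-$H(n)$-invariant function $F(\sigma)=z^{\ell(\rho(\sigma))}$ on $S(2n)$ (your identification of the circles of $\bx$ with the parts of the coset type is right), and the convolution identity $\sum_{\sigma}w^{\lambda}(\sigma g)w^{\mu}(\sigma)=\delta_{\lambda\mu}\frac{(2n)!}{\dim(2\lambda)}w^{\lambda}(g)$ are all sound.

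The genuine gap is at the one step you defer to ``a binomial/principal-specialization identity'' and do not perform --- and the outcome you assert for it is wrong. What the zonal theory actually gives is $\sum_{\sigma}z^{\ell(\rho(\sigma))}w^{\lambda}(\sigma)=|H(n)|\,Z_{\lambda}\bigl|_{p_k\equiv z}$ with $Z_{\lambda}=J^{(\alpha=2)}_{\lambda}$ and $J^{(\alpha)}_{\lambda}\bigl|_{p_k\equiv z}=\prod_{(i,j)\in\lambda}\bigl(z+\alpha(j-1)-(i-1)\bigr)$; at $\alpha=2$ this product equals $2^n\,(z/2)_{\lambda,\theta=\frac12}$, \emph{not} $(z)_{\lambda,\theta=\frac12}$ as you claim $c_{\lambda}(z)$ ``carries.'' Tracking all your constants honestly, with $|H(n)|=2^nn!$, $t(t+2)\cdots(t+2n-2)=2^n(t/2)_n$ and $h(2\lambda)=4^nH(\lambda,\tfrac12)H'(\lambda,\tfrac12)$, your route terminates at
\begin{equation*}
\varphi_z|_{S(2n)}(g)=\sum\limits_{|\lambda|=n}M^{(n)}_{\frac{z}{2},\frac{\bar z}{2},\,\theta=\frac12}(\lambda)\,w^{\lambda}(g),
\end{equation*}
equivalently with weights $M^{(n)}_{-z,-\bar z,\theta=2}(\lambda')$ by Proposition \ref{PropositionMSymmetries} --- that is, the stated formula with $z$ replaced by $z/2$ in the measure. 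No ``routine simplification of the constants'' can remove this factor of two: test $z=1$, where the cocycle is trivial and $\mu_1$ is invariant, so $\varphi_1\equiv 1=w^{(n)}$, and uniqueness of the expansion in the basis $\{w^{\lambda}\}$ forces the weights to be $\delta_{\lambda,(n)}$. Indeed $M^{(n)}_{1/2,1/2,\theta=1/2}=\delta_{(n)}$, since $(z/2)_{\lambda,1/2}$ vanishes at $z=1$ for every $\lambda$ with a second row, whereas $M^{(n)}_{1,1,\theta=1/2}$ is not a delta measure (e.g.\ $M^{(2)}_{1,1,1/2}\bigl((1,1)\bigr)=\tfrac19$). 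So your endgame as written cannot reproduce \eqref{EquationVer4zmeasuren} with parameters $(z,\bar z)$: the honest computation shows the proposition holds for $T_{2z,\frac12}$ in the conventions of Section 2 (a parameter-normalization mismatch between this survey's $\theta$-form of the $z$-measures and the source \cite{strahov1}), and a correct proof must confront exactly this rescaling. Since everything before this point is, as you say yourself, bookkeeping, leaving the single substantive computation as an unverified --- and incorrectly stated --- assertion is a real defect, not a detail.
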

\begin{proof} See Strahov \cite{strahov1}, Proposition 6.3
\end{proof}
\subsection{Spectral representation}
Recall the definition of the coset type of a permutation from
$S(2n)$ (Macdonald \cite{macdonald}, VII, \S 2). If $g\in
S(2n)$, then we can associate with $g$ a graph $\Gamma(g)$
with vertices $1, 2,\ldots,2n$, and edges $\epsilon_i,
g\epsilon_i$ ($1\leq i\leq n$). Each edge $\epsilon_i$ joins the
vertices $2i-1$ and $2i$, and $g\epsilon_i$ joins the vertices
$g(2i-1)$ and $g(2i)$. Then the connected components of
$\Gamma(g)$ can be understood as cycles of even lengths
$2\rho_1,2\rho_2,\ldots,$ where $\rho_1\geq\rho_2\geq\ldots$. Thus
each $g\in S(2n)$ gives rise to a partition
$\rho=(\rho_1,\rho_2,\ldots )$ of $n$, called the coset type of
$g$. For example, if
$$
g=(135)(67)(248)\in S(8),
$$
then the graph $\Gamma(g)$ is shown on Fig.2, and the coset type
of $g$ is $\rho=(3,1)$.
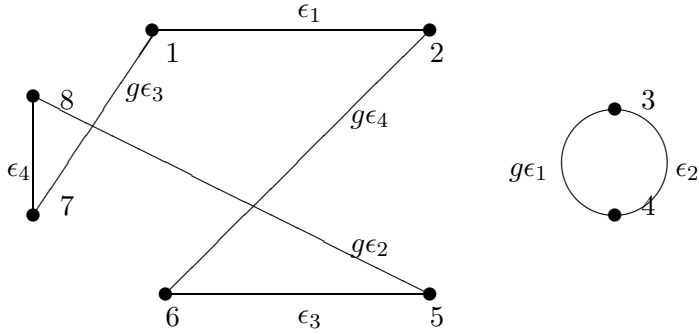
\begin{figure}
%\unitlength=2mm
\begin{picture}(100,150)
%%%
\put(-55,140){\circle*{5}}
%%%%
\put(-50,128){$1$}
%%%%%%%
 \put(-55,140){\line(1,0){105}}
%%%
\put(50,140){\circle*{5}}
%%%%
\put(50,128){$2$}
%%%%%%%%%
\put(120,110){\circle*{5}}
%%%%%%%%%%
%%%%
\put(130,110){$3$}
%%%%%%%%%
%%%%
\put(120,70){\circle*{5}}
%%%%%%%%%%
%%%%
\put(130,70){$4$}
%%%%%%%%%
%%%%
\put(-100,115){\circle*{5}}
%%%%%%%%%%
%%%%
\put(-90,110){$8$}
%%%%%%%%%
%%%%
\put(-90,70){$7$}
%%%%%%%%%
%%%%
\put(-100,70){\line(2,3){47}}
%%%%%%%%%
%%%%
\put(-100,70){\circle*{5}}
%%%%%%%%%%
%%%
\put(-50,40){\circle*{5}}
%%%%
\put(50,40){\circle*{5}}
%%%%%%%%%%
%%%%%%%
 \put(-50,40){\line(1,0){100}}
%%%
%%%%
\put(-50,28){$6$}
%%%%%%%
%%%%
\put(50,28){$5$}
%%%%%%%
%%%%
\put(120,90){\circle{40}}
%%%%%%%%%%
%%%%%%%
 \put(-50,40){\line(1,1){100}}
%%%
%%%%%%%
 \put(50,40){\line(-2,1){150}}
%%%
%%%%%%%
 \put(-100,70){\line(0,1){45}}
%%%
%%%%
\put(80,85){$g\epsilon_1$}
%%%%%%%%%
%%%%
\put(143,85){$\epsilon_2$}
%%%%%%%%%
%%%%
\put(-110,85){$\epsilon_4$}
%%%%%%%%%
%%%%
\put(20,105){$g\epsilon_4$}
%%%%%%%%%
%%%%
\put(0,145){$\epsilon_1$}
%%%%%%%%%
%%%%
\put(0,29){$\epsilon_3$}
%%%%%%%%%
%%%%
\put(20,56){$g\epsilon_2$}
%%%%%%%%%
%%%%
\put(-65,115){$g\epsilon_3$}
%%%%%%%%%
\end{picture}
\caption{Graph $\Gamma(g)$, where $g=(135)(67)(248)\in S(8)$.}
\end{figure}
\begin{thm}\label{PropositionSpectralRepresentation}
Let $\varphi_z$ be the spherical function of the representation
$T_{z,\frac{1}{2}}$. There exists a unique probability measure
$M_{z,\theta=\frac{1}{2}}^{\Spectral}$ on $\Omega$ such that for
any $g\in S(2\infty)$
$$
\varphi_z(g)=\int\limits_{\Omega}\chi_{\theta=\frac{1}{2}}^{(w)}(g)M_{z,\theta=\frac{1}{2}}^{\Spectral}(d\omega),
$$
where
\begin{equation}\label{AnalogTheoremThomaFormula}
\chi_{\theta=\frac{1}{2}}^{(w)}(g)=
\prod\limits_{k=2}^{\infty}(\widetilde{p}_k(w|\theta=\frac{1}{2}))^{\rho_k(g)},
\end{equation}
and where
$\rho(g)=\left(\rho_1(g),\rho_2(g),\rho_3(g),\ldots\right)$
is the coset type of $g\in S(2\infty)$.
\end{thm}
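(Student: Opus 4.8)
The plan is to combine the restriction formula of Proposition \ref{RestrictionSphericalFunctions} with the integral representation of the $z$-measures supplied by Theorem \ref{TheoremZmeasuresSpectralMeasures}, and then to collapse the resulting finite sum over Young diagrams by means of the classical duality between the zonal spherical functions of $(S(2n),H(n))$ and the Jack (zonal) polynomials with parameter $\theta=\tfrac12$.

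First I would fix $g\in S(2\infty)$ and choose $n$ so large that $g\in S(2n)$. Note that the multiplicities $\rho_k(g)$ with $k\geq 2$ do not change as $n$ grows, since increasing $n$ only adds trivial pairs (parts equal to $1$ in the coset type); hence $\chi^{(w)}_{\theta=\frac12}(g)=\prod_{k\geq2}(\widetilde p_k(w|\tfrac12))^{\rho_k(g)}$ does not depend on the choice of $n$, consistently with $\widetilde p_1\equiv 1$. By Proposition \ref{RestrictionSphericalFunctions},
\[
\varphi_z(g)=\varphi_z|_{S(2n)}(g)=\sum_{|\lambda|=n}M^{(n)}_{z,\bar z,\theta=\frac12}(\lambda)\,w^\lambda(g).
\]
Substituting the expression for $M^{(n)}_{z,\bar z,\theta=\frac12}(\lambda)$ from Theorem \ref{TheoremZmeasuresSpectralMeasures} (with $\theta=\tfrac12$) and interchanging the finite sum with the integral, I obtain
\[
\varphi_z(g)=\int_{\Omega}\left(\sum_{|\lambda|=n}\frac{n!}{H(\lambda,\tfrac12)}\,\widetilde P_\lambda(\omega|\tfrac12)\,w^\lambda(g)\right)M^{\Spectral}_{z,\bar z,\theta=\frac12}(d\omega),
\]
and I would set $M^{\Spectral}_{z,\theta=\frac12}:=M^{\Spectral}_{z,\bar z,\theta=\frac12}$, which is a probability measure by Theorem \ref{TheoremZmeasuresSpectralMeasures}.

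The heart of the argument, and the step I expect to be the main obstacle, is to recognize the inner sum as $\widetilde p_{\rho(g)}(\omega|\tfrac12)=\chi^{(w)}_{\theta=\frac12}(g)$. For this I would invoke the Gelfand-pair analogue of the Frobenius character formula (Macdonald \cite{macdonald}, VII, \S 2): the zonal spherical functions $w^\lambda$ and the Jack/zonal polynomials $P_\lambda(\,\cdot\,|\tfrac12)$ form a pair of dual bases, so that for $g$ of coset type $\rho\vdash n$ one has, in the algebra $\Lambda$ of symmetric functions, the power-sum expansion
\[
p_\rho=\sum_{|\lambda|=n}\frac{n!}{H(\lambda,\tfrac12)}\,w^\lambda(g)\,P_\lambda(x|\tfrac12).
\]
Applying the embedding $p_k\mapsto\widetilde p_k(\,\cdot\,|\tfrac12)$ of \eqref{LambdaEmbedding} to both sides turns the right-hand side into the bracketed sum above and the left-hand side into $\widetilde p_\rho(\omega|\tfrac12)=\chi^{(w)}_{\theta=\frac12}(g)$. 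The delicate point is matching the normalization: I must verify that the coefficient produced by the duality is exactly $n!/H(\lambda,\tfrac12)$, i.e. that $n!/H(\lambda,\tfrac12)$ is the correct $\theta=\tfrac12$ analogue of the dimension $f^\lambda$ reconciling the un-normalized Frobenius coefficient with the spherical normalization $w^\lambda(e)=1$. This is a bookkeeping computation with the hook products $H(\lambda,\theta)$, $H'(\lambda,\theta)$, the Jack norm $\langle P_\lambda,P_\lambda\rangle_{1/2}$, and the classical constant relating zonal polynomials to Jack polynomials at $\alpha=2$; a consistency check at $\rho=(1^n)$ (where $\varphi_z(e)=1$ and $p_1^{\,n}=\sum_\lambda \tfrac{n!}{H(\lambda,1/2)}P_\lambda$) confirms the normalization.

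Finally, uniqueness follows from the observation that the functions $\omega\mapsto\chi^{(w)}_{\theta=\frac12}(g)=\widetilde p_{\rho(g)}(\omega|\tfrac12)$, as $g$ ranges over $S(2\infty)$, are precisely the images $\widetilde p_\rho(\omega|\tfrac12)$ of all power sums. Since the $\widetilde p_k$ with $k\geq 2$ separate points of the compact Thoma set $\Omega$, the polynomial algebra they generate is dense in $C(\Omega)$ by the Stone--Weierstrass theorem, so a finite measure on $\Omega$ is uniquely determined by the integrals $\int_\Omega \chi^{(w)}_{\theta=\frac12}(g)\,M(d\omega)$, $g\in S(2\infty)$. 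Hence the probability measure $M^{\Spectral}_{z,\theta=\frac12}$ constructed above is the unique one satisfying the stated integral representation.
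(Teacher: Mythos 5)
Your proposal is correct and follows essentially the same route as the paper: the restriction formula (Proposition \ref{RestrictionSphericalFunctions}) combined with Theorem \ref{TheoremZmeasuresSpectralMeasures}, then the zonal-polynomial expansion of $p_\rho$ from Macdonald VII, \S 2, (2.16) to collapse the sum over $\lambda$ --- your $P$-normalized coefficient $n!/H(\lambda,\tfrac12)$ agrees with the paper's $|H(n)|/h(2\lambda)$ in the $J$-normalization, since $J_\lambda^{(2)}=c_\lambda(2)P_\lambda(\cdot|\tfrac12)$, $h(2\lambda)=c_\lambda(2)c'_\lambda(2)$, $c'_\lambda(2)=2^nH(\lambda,\tfrac12)$ and $|H(n)|=2^nn!$, so the normalization you flagged as delicate does check out. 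Your explicit Stone--Weierstrass argument for uniqueness and the verification that $\chi^{(w)}_{\theta=\frac12}(g)$ is independent of the choice of $n$ are details the paper leaves implicit, and both are sound.
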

%\\
\begin{rem}
a) The notion of coset-type for a permutation $g\in S(2n)$ can be
extended to elements of $S(2\infty)$ in an obvious way.\\
b) The formula in  Theorem \ref{PropositionSpectralRepresentation} can be understood as a
natural analogue of the spectral decomposition of the characters of
the generalized regular representation of the infinte symmetric
group, see Section 9.7 in Kerov, Olshanski,
and Vershik \cite{KOV1,KOV2}. The functions
$\chi_{\theta=\frac{1}{2}}^{(w)}(g)$ introduced in the statement
of  Theorem \ref{PropositionSpectralRepresentation} play the role of the extreme characters of
$S(\infty)$. Moreover, formula (\ref{AnalogTheoremThomaFormula})
is an analogue of an  explicit formula for the  extreme characters of
the infinite symmetric group given by the Thoma theorem.\footnote{For a modern presentation of the Thoma theorem see Kerov, Olshanski, and Vershik \cite{KOV2}, Section 9.}
\end{rem}
\begin{proof}We use  Theorem \ref{TheoremZmeasuresSpectralMeasures}, Proposition \ref{RestrictionSphericalFunctions}, and  find
$$
\varphi_z|_{S(2n)}(g)=
\int\limits_{\Omega}\left(\sum\limits_{|\lambda|=n}\frac{n!}{H(\lambda,\theta=\frac{1}{2})}
\widetilde{P}_{\lambda}(\omega|\theta=\frac{1}{2})w^{\lambda}(g)\right)M_{z,\bar z,\theta=\frac{1}{2}}^{\Spectral}(d\omega)
$$
Note that the zonal spherical functions $w^{\lambda}$ of the
Gelfand pair $(S(2n),H(n))$ are constant on the double cosets
$H(n)gH(n)$ of $H(n)$ in $S(2n)$. Two permutations
$g, g_1\in S(2n)$ have the same coset-type if and only if
$g_1\in H(n)gH(n)$. Therefore for any $g\in S(2n)$ we can
write
$$
w^{\lambda}(g)=w^{\lambda}_{\rho(g)},\;\rho(g)\;\mbox{is the
coset type of}\; g\in S(2n).
$$
Let $\widetilde{J}_{\lambda}(\omega|\theta=\frac{1}{2})$ be the image of the Jack symmetric function
$J_{\lambda}^{\alpha}$
with the parameter $\alpha=2$ under the embedding defined by equation
(\ref{LambdaEmbedding}).  The functions  $\widetilde{P}_{\lambda}(\omega|\theta=\frac{1}{2})$ are expressible in terms
of $\widetilde{J}_{\lambda}(\omega|\theta=\frac{1}{2})$, see Macdonald \cite{macdonald},VI, equation (10.22).
Taking this into account it is not hard to see that
the sum inside the integral above can be rewritten as
$$
\sum\limits_{|\lambda|=n}\frac{|H(n)|}{h(2\lambda)}w_{\rho(g)}^{\lambda}\widetilde{J}_{\lambda}(\omega|\theta=\frac{1}{2}),
$$
where  $h(2\lambda)$ is the product of the hook-lengths of the
partition $2\lambda$. Formula (2.16) of Macdonald
\cite{macdonald}, VII, \S 2 gives
$$
\sum\limits_{|\lambda|=n}\frac{|H(n)|}{h(2\lambda)}w_{\rho(g)}^{\lambda}\widetilde{J}_{\lambda}(\omega|\theta=\frac{1}{2})=\prod\limits_{k=2}^{\infty}
\left(\widetilde{p}_k(w|\theta=\frac{1}{2})\right)^{\rho_k(g)},
$$
and the statement of the Theorem follows.
\end{proof}
\section{The point process defined by $M_{z,\theta=\frac{1}{2}}^{\Spectral}$}
Now our aim is to describe the probability measures $M_{z,\theta=\frac{1}{2}}^{\Spectral}$. We use the idea proposed by Borodin and Olshanski
to view the infinite collection of parameters
$\left(\alpha_1,\alpha_2,\ldots;\beta_1,\beta_2\ldots\right)$ as random points distributed according to $M_{z,\theta=\frac{1}{2}}^{\Spectral}$.
In this way we obtain a point process, and the correlation functions of this point process provide a detailed description of $M_{z,\theta=\frac{1}{2}}^{\Spectral}$.

It is more convenient to work with lifted spectral measures $\widetilde{M}^{\Spectral}_{z,\theta=\frac{1}{2}}$ defined as follows.
Denote by $\widetilde{\Omega}$ the set of triples
$$\tilde{\omega}=(\alpha,\beta,\delta)\in\R^{2\infty}\times\R_{\geq 0},$$
where 
$$
\alpha=(\alpha_1\geq\alpha_2\geq\ldots\geq 0),\;
\beta=(\beta_1\geq\beta_2\geq\ldots\geq 0),\; \delta\in\R_{\geq
0},
$$ 
and 
$$
\sum\limits_{i=1}^{\infty}(\alpha_i+\beta_i)\leq\delta.
$$
Consider the map
$$
((\alpha,\beta),\delta)\in\Omega\times\R_{\geq 0}\rightarrow
(\delta\alpha,\delta\beta,\delta)\in\widetilde{\Omega}.
$$
By definition, the measure
$\widetilde{M}^{\Spectral}_{z,\theta=\frac{1}{2}}$ is the
pushforward of the measure
$$
M^{\Spectral}_{z,\theta=\frac{1}{2}}\otimes\left(\frac{s^{2z\bar z-1}}{\Gamma(2z\bar z)}e^{-s}ds\right)
$$
on $\Omega\times\R_{\geq 0}$ under this map. The procedure to
obtain the probability measure
$\widetilde{M}^{\Spectral}_{z,\theta=\frac{1}{2}}$ from the
probability measure $M^{\Spectral}_{z,\theta=\frac{1}{2}}$ is
called lifting.\footnote{This procedure was first introduced in Borodin \cite{Borodin-5} in the context of determinantal processes relevant for the harmonic analysis
on the infinite symmetric group. It leads to an essential simplification of the correlation functions.}
We define the embedding $\widetilde{\Omega}\rightarrow\Conf(\R\setminus\{0\})$ as 
$$
\tilde{\omega}=(\alpha,\beta,\delta)\rightarrow C=\left\{\delta\alpha_i\neq 0\right\}\cup\left\{-\delta\beta_j\neq 0\right\}.
$$
This way  we convert $\tilde{\omega}$ to a point configuration $C$ in $\R\setminus\{0\}$.
Given a probability measure on $\widetilde{\Omega}$, its pushforward under this embedding
is a probability measure on point configurations in $\R\setminus\{0\}$, i.e. a point process. In
particular, the probabilty measure $\widetilde{M}^{\Spectral}_{z,\theta=\frac{1}{2}}$ defines a point process in $\R\setminus\{0\}$.

Let $F:\R_{>0}^n\rightarrow\C$ be any continuous function with compact support. The equality
\begin{equation}
\begin{split}
\int\limits_{w=(\alpha,\beta)\in\Omega}\sum\limits_{i_1,\ldots,i_k}F(\alpha_{i_1},\ldots,\alpha_{i_n})\widetilde{M}^{\Spectral}_{z,\theta=\frac{1}{2}}(dw)
=\int\limits_{\R^n>0}F(x_1,\ldots,x_n)\varrho^{\Spectral}_n(dx)
\end{split}
\nonumber
\end{equation}
(where the sum is taken over pairwise distinct indexes) defines the correlation measures
$\varrho^{\Spectral}_n(dx)$. The correlation functions $\varrho^{\Spectral}_n(x)$ are densities of $\varrho^{\Spectral}_n(dx)$ with respect to the Lebesgue measure $dx$.
These correlation functions, $\varrho^{\Spectral}_n(x)$, describe correlations of $\{\alpha_i\}$. It is possible to consider
joint correlations of $\{\alpha_i\}$ and $\{\beta_i\}$, but in this paper we deal with correlations of $\{\alpha_i\}$ only.

In order to present the main result of the paper let us introduce
the following notation. First, we introduce the functions
$w_a(x;z,z')$ indexed by $a\in\Z+\frac{1}{2}$, parameterized by two complex parameters $z$ and $z'$, and whose argument $x$ varies
in $\R_{>0}$. They are expressed through the classical Whittaker
functions $W_{k,m}(x)$, see Andrews, Askey, and Roy
\cite{andrews}, Section 4, for a definition. The functions
$w_a(x;z,z')$ are defined in terms of   $W_{k,m}(x)$ as 
\begin{equation}
w_a(x;z,z')=\left(\Gamma(z-a+\frac{1}{2})\Gamma(z'-a+\frac{1}{2})\right)^{-\frac{1}{2}}x^{-\frac{1}{2}}W_{\frac{z+z'}{2}-a,\frac{z-z'}{2}}(x).
\end{equation}
Since $W_{k,\mu}(x)=W_{k,-\mu}(x)$, this expression is symmetric
with respect to $z\longleftrightarrow z'$.

Second, we introduce a function of two arguments called the
(scalar) Whittaker kernel. This function is parameterized by two
complex parameters, $z$ and $z'$, and can be  written
as

\begin{equation}
 K^{W}_{z,z'}(x,y)
=\sqrt{zz'}\frac{w_{-\frac{1}{2}}(x;z,z')w_{\frac{1}{2}}(y;z,z')-w_{\frac{1}{2}}(x;z,z')w_{-\frac{1}{2}}(y;z,z')}{x-y}.
\end{equation}

\begin{thm}\label{THEOREMMAINRESULT} For any $z\in\C\setminus\{0\}$
\begin{equation}\label{varrhoequationr}
\varrho_n^{\Spectral}(x_1,\ldots,x_n)=\Pf\left[\mathbb{K}^{\Spectral}(x_i,x_j)\right]_{i,j=1}^{n}.
\end{equation}
 The $2\times 2$ matrix valued correlation kernel  $\mathbb{K}^{\Spectral}(x,y)$ can be written as
$$
\mathbb{K}^{\Spectral}(x,y)=
\left[\begin{array}{cc}
  S(x,y) & \frac{d}{dy}S(x,y) \\
  \frac{d}{dx}S(x,y) & \frac{d^2}{dxdy}S(x,y)\\
\end{array}\right],
$$
where
\begin{equation}
\begin{split}
S(x,y)&=-\frac{1}{2}\sqrt{y}\int\limits_{x}^{+\infty}K^{W}_{-2z,-2\bar{z}}(s,y)\frac{ds}{\sqrt{s}}\\
&+\frac{\sqrt{z\bar{z}}}{4}\left(\int\limits_{x}^{+\infty}w_{-\frac{1}{2}}(s;-2z,-2\bar{z})\frac{ds}{\sqrt{s}}\right)
\left(\int\limits_{y}^{+\infty}w_{\frac{1}{2}}(t;-2z,-2\bar{z})\frac{dt}{\sqrt{t}}\right).
\end{split}
\nonumber
\end{equation}
\end{thm}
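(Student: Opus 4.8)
The plan is to descend from the boundary object $M^{\Spectral}_{z,\theta=\frac12}$ to the prelimit $z$-measures $M^{(n)}_{z,\bar z,\theta=\frac12}$ on $\Y_n$, prove a Pfaffian structure there, and then recover \eqref{varrhoequationr} by a scaling limit. The link is supplied by Theorem \ref{TheoremZmeasuresSpectralMeasures} together with Theorem \ref{PropositionSpectralRepresentation}: the spectral measure is the limit, as $n\to\infty$, of the $M^{(n)}_{z,\bar z,\theta=\frac12}$, and the point process attached to $\widetilde M^{\Spectral}_{z,\theta=\frac12}$ is the corresponding limit of the discrete processes on Young diagrams. I would first make this precise by encoding each $\la\in\Y_n$ through its modified Frobenius coordinates as a configuration $\{p_i+\frac12\}\cup\{-(q_j+\frac12)\}$ on $\Z+\frac12$, so that the positive points feed the $\al$-coordinates, the negative points the $\be$-coordinates, and the sum of absolute values equals $n$. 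This is the lattice analogue of the embedding $\widetilde\Om\to\Conf(\R\setminus\{0\})$ used to define $\varrho^{\Spectral}_n$.

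Second, I would establish that the discrete process on $\Z+\frac12$ governed by $M^{(n)}_{z,\bar z,\theta=\frac12}$ is Pfaffian, with an explicitly computable $2\times2$ matrix kernel. The algebraic engine is the product form of $M^{(n)}_{z,\bar z,\theta=\frac12}$: at $\theta=\frac12$ one has $(\tfrac{z\bar z}{\theta})_n=(2z\bar z)_n$, and the factors $(z)_{\la,\frac12}$, $(\bar z)_{\la,\frac12}$, $H(\la,\frac12)$, $H'(\la,\frac12)$ recombine into a form admitting a Pfaffian Cauchy-type identity for zonal (Jack $\alpha=2$) functions, in contrast to the determinantal identity available only at $\theta=1$. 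I would invoke the known results on Pfaffian point processes for $z$-measures with Jack parameter $\theta=\frac12$ (Petrov \cite{petrov}, Strahov \cite{strahov,strahov2}) and read off the discrete kernel, whose entries are built from Gauss hypergeometric (Meixner-type) functions with parameters assembled from $2z$ and $2\bar z$; the appearance of the doubled parameters is consistent with Proposition \ref{PropositionMSymmetries}, which exchanges $\theta=\frac12$ with $\theta=2$ and sends $z\mapsto -2z$.

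Third, I would carry out the lifting and the scaling limit. The lifting by the Gamma weight $\frac{s^{2z\bar z-1}}{\Gamma(2z\bar z)}e^{-s}\,ds$ is the continuous counterpart of passing from the fixed-$n$ measures to a grand-canonical mixture and then rescaling the lattice $\Z+\frac12$ by a spacing $\eps\to0$ with $n\eps\to\delta$; this is the device of Borodin \cite{Borodin-5} that simplifies the correlation functions. Under this rescaling the discrete matrix kernel converges to a continuous $2\times2$ kernel: the finite differences in the off-diagonal entries turn into the derivatives $\frac{d}{dy}S$, $\frac{d}{dx}S$, $\frac{d^2}{dx\,dy}S$, while the discrete hypergeometric functions degenerate, through a confluent limit, into the Whittaker functions $W_{k,m}$ of \cite{andrews}, producing the functions $w_a(x;-2z,-2\bar z)$ and hence the scalar Whittaker kernel $K^{W}_{-2z,-2\bar z}$. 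Tracking the limit of the rank-one part of the discrete kernel yields the product of tail integrals $\big(\int_x^{+\infty}w_{-\frac12}\big)\big(\int_y^{+\infty}w_{\frac12}\big)$, and assembling everything reproduces the stated formula for $S(x,y)$.

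The hard part will be the scaling limit of the third step. One must control the convergence of the discrete Pfaffian kernel to the continuous matrix kernel strongly enough (uniformly on compacta, with integrable tail bounds) to conclude convergence of every correlation function $\varrho^{\Spectral}_n$, and this requires uniform asymptotics of the discrete hypergeometric functions together with their confluence to $W_{k,m}$, keeping careful track of the derivative entries occupying the off-diagonal positions of $\mathbb{K}^{\Spectral}$. Because the process lives on the noncompact set $\R\setminus\{0\}$ and the Whittaker functions, though decaying at $+\infty$, are delicate near the origin, the crux is to secure the tail and near-zero estimates that justify interchanging the limit with summation and integration. By comparison, the finite-$n$ Pfaffian identity of the second step, while computational, is essentially algebraic and should follow from the known $\theta=\frac12$ results.
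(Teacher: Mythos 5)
Your outline follows essentially the same route as the paper's Section 5: pass to a lattice point process on $\Z+\frac12$, use the symmetry of Proposition \ref{PropositionMSymmetries} to trade $\theta=\frac12$ for $\theta=2$ (which is exactly where the doubled parameters $-2z,-2\bar z$ come from), read off a Pfaffian structure with a $2\times 2$ matrix hypergeometric kernel from Strahov \cite{strahov2}, and then take the confluent $\xi\nearrow 1$ scaling limit, in which the hypergeometric entries degenerate to Whittaker functions and the discrete difference structure produces the derivative entries of $\mathbb{K}^{\Spectral}$; your identification of the hard analytic point (uniform asymptotics justifying the limit in every correlation function, controlled in the paper's sources via the double contour integral representations of \cite{strahov2}) also matches.

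Two genuine missteps, though. First, your step two as written would stall: the fixed-$n$ measures $M^{(n)}_{z,\bar z,\theta=\frac12}$ on $\Y_n$ are \emph{not} known to carry a Pfaffian structure, and the results you would invoke from \cite{strahov2} concern the \emph{mixed} $z$-measures $\widetilde{M}_{z,\bar z,\theta,\xi}$, obtained by mixing over $n$ with the negative-binomial weight proportional to $(\frac{z\bar z}{\theta})_n\,\xi^n/n!$. The mixing must precede, not follow, the Pfaffian computation; it is precisely this $\xi$-mixture whose $\xi\nearrow 1$, $(1-\xi)x_i\to u_i$ limit matches the Gamma-weight lifting (Proposition \ref{PropLimitRelationSpectralMK}, proved by repeating the Borodin--Olshanski arguments of \cite{BorodinCombin}; the lifting device is from \cite{Borodin-5}). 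Your placement of the grand-canonical step inside the lifting discussion suggests you see this, but the claimed ``Pfaffian Cauchy-type identity'' at fixed $n$ is not part of any known argument. Second, the lattice encoding is not via the ordinary modified Frobenius coordinates $\{p_i+\frac12\}\cup\{-(q_j+\frac12)\}$: for $\theta\neq 1$ the diagram is split into positive and negative boxes according to the sign of the $\theta$-content $c_{\theta}(b)=(j-1)-\theta(i-1)$, and one uses the resulting $\theta$-dependent coordinates $(a_1,\ldots,a_r\,|\,b_1,\ldots,b_s)$ mapped to $(A|B)_{\theta}(\lambda)\subset\Z+\frac12$; at $\theta=\frac12$ (or $\theta=2$) these differ from the $\theta=1$ coordinates you wrote, and using the wrong coordinates would break the limit transition to the $(\alpha,\beta)$-coordinates on the Thoma set. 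With these two corrections your proposal coincides with the paper's proof sketch of Theorem \ref{THEOREMMAINRESULT}.
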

\begin{rem}
a) All matrix elements of  $\mathbb{K}^{\Spectral}(x,y)$ are constructed in terms of the Whittaker functions.\\
b) From the definition of the Whittaker functions it follows   that $\mathbb{K}^{\Spectral}(x,y)$ is real valued.\\
c) It can be shown that
for any $z\in\C\setminus\left\{0\right\}$ the following condition is satisfied
$$
S(x,y)=-S(y,x).
$$
Thus the matrix inside the Pfaffian in equation (\ref{varrhoequationr}) is antisymmetric.\\
d) The function $K^{W}_{z,\bar{z}}(x,y)$ is the (scalar) correlation kernel of a determinantal process which arises in the study 
of the decomposition of the generalized regular representation of $S(\infty)$ into irreducible components, see Borodin and Olshanski \cite{Borodin-3}.
\end{rem}
\section{Idea of the proof of Theorem \ref{THEOREMMAINRESULT}}

The first step in the proof of Theorem \ref{THEOREMMAINRESULT} is to construct a point process on
the lattice $\Z+\frac{1}{2}$, which converges to the point process defined by $\widetilde{M}^{\Spectral}_{z,\theta=\frac{1}{2}}$.
Given a box $b=(i,j)$ of a Young
diagram and a parameter $\theta>0$ the number
$$
c_{\theta}(b)=(j-1)-\theta(i-1)
$$
is referred to as $\theta$-content of the box $b$. A box $b$ of a Young diagram is
said to be positive or negative according to the sign of its
$\theta$-content. We can consider any Young diagram
$\lambda=(\lambda_1,\ldots,\lambda_l)$  as a
collection of boxes
$$
\lambda\equiv\left\{b(i,j): 1\leq i\leq l, 1\leq j\leq
\lambda_i\right\},
$$
and we  can split $\lambda$ into a union of disjoint subsets of its
positive and negative boxes
$$
\lambda^+=\left\{b\in\lambda|c_{\theta}(b)>0\right\},\;\lambda^{-}=\left\{b\in\lambda|c_{\theta}(b)\leq
0\right\}.
$$
Denote by $r$ the number of rows in $\lambda^+$, and by $s$ the
number of columns in $\lambda^{-}$. Let
$$
a_1\geq a_2\geq\ldots\geq a_r>0,\;\; b_1\geq b_2\geq\ldots\geq
b_s>0
$$
denote the lengths of corresponding rows and columns. Then the
expression
$$
\lambda=\left(a_1,\ldots,a_r\biggl|b_1,\ldots,b_s\right)
$$
defines coordinates of the Young diagram $\lambda$, and these
coordinates are dependent on the Jack parameter $\theta$. Now each Yound diagram $\lambda$ can be represented as a point configuration on
$\Z+\frac{1}{2}$ using the map
$$
\lambda\longrightarrow(A|B)_{\theta}(\lambda)=(-a_1-\frac{1}{2},\ldots,-a_r-\frac{1}{2};b_1+\frac{1}{2},\ldots,b_s+\frac{1}{2}).
$$
Next define a probability measure $\widetilde{M}_{z,\bar z,\theta,\xi}$ on the set of all Young diagrams $\Y$
by
\begin{equation}
\widetilde{M}_{z,\bar z,\theta,\xi}=(1-\frac{z\bar z}{\theta})^{\frac{z\bar z}{\theta}}\frac{(\frac{z\bar z}{\theta})_n}{n!}\xi^nM_{z,\bar z,\theta}^{(n)}(\lambda),\;\;|\lambda|=n.
\end{equation}
Here $0<\xi<1$ is an additional parameter. We refer to $\widetilde{M}_{z,\bar z,\theta,\xi}$ as to the mixed $z$-measures with an arbitrary Jack parameter $\theta>0$.
Define the lattice correlation functions $\varrho_{n,\theta,\xi}$ with the general parameter $\theta>0$
as  probabilities that the set $(A|B)_{\theta}(\lambda)$ contains a fixed subset $X$ of $\Z_{\geq 0}+\frac{1}{2}$. More precisely,
$$
\varrho_{n,\theta,\xi}(X)=\widetilde{M}_{z,\bar z,\theta,\xi}\left(\lambda\vert X\subset(A|B)_{\theta}(\lambda)\right),\;\;X\subset\Z_{\geq 0}+\frac{1}{2}.
$$
The crucial fact which enables us to compute the correlation functions of  $\widetilde{M}^{\Spectral}_{z,\theta=\frac{1}{2}}$, and  to prove
Theorem \ref{THEOREMMAINRESULT} is the following
\begin{prop}\label{PropLimitRelationSpectralMK}
Let $\xi\nearrow 1$, and assume that
 $x_1,\ldots,x_n\rightarrow +\infty$ inside
$\Z_{\geq 0}+\frac{1}{2}$ such that
\begin{equation}
\begin{split}
(1-\xi)x_1\rightarrow u_1,\ldots,(1-\xi)x_n\rightarrow u_n,
\end{split}
\nonumber
\end{equation}
where  $u_1,\ldots,u_n$ are pairwise distinct points of $\R_{>0}$. \\
Then  we have
\begin{equation}\label{LimitRelationSpectralMK}
\begin{split}
\varrho^{\Spectral}_n(u_1,\ldots,u_n)=\underset{\xi\nearrow
1}{\lim}\;(1-\xi)^{-n}\varrho_{n,\theta=\frac{1}{2},\xi}\left(x_1,\ldots,x_n\right).
\end{split}
\nonumber
\end{equation}
\end{prop}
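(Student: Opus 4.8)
The plan is to realise both sides of the asserted equality as correlation functions of one and the same family of point processes, and then to match two scaling limits coupled through the single parameter $1-\xi$. First I would unfold the mixed measure $\widetilde{M}_{z,\bar z,\theta,\xi}$. Collecting diagrams by size, the $n$-dependent factor $\frac{(z\bar z/\theta)_n}{n!}\,\xi^n$ (times the $\xi$-dependent normalising constant) is precisely the weight of a negative binomial law on $n$ with shape parameter $a=z\bar z/\theta$, which for $\theta=\frac12$ equals $2z\bar z$; conditionally on $n$ the diagram $\lambda$ is distributed according to the $z$-measure $M^{(n)}_{z,\bar z,\theta}$. Thus a $\widetilde{M}_{z,\bar z,\theta,\xi}$-random diagram is a $z$-measure diagram whose size is negative binomial, and the lattice process is its image under $\lambda\mapsto(A|B)_\theta(\lambda)$, with $\varrho_{n,\theta,\xi}$ its inclusion (correlation) function.

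Second, I would separate the two limits that combine as $\xi\nearrow1$. (i) The size concentrates at infinity, and the standard negative-binomial-to-Gamma transition gives $(1-\xi)\,n\Rightarrow s$, where $s$ carries exactly the Gamma density $s^{2z\bar z-1}e^{-s}/\Gamma(2z\bar z)$ appearing in the lifting; this is the source of the variable $\delta$ in $\widetilde{M}^{\Spectral}_{z,\theta=\frac12}$. (ii) Conditionally on a large size $n$, the normalised coordinates of $\lambda$ converge to a point $(\alpha,\beta)\in\Omega$ distributed according to $M^{\Spectral}_{z,\theta=\frac12}$; this is the law of large numbers on the boundary $\Omega$ that is the probabilistic content of Theorem \ref{TheoremZmeasuresSpectralMeasures} and of Kerov--Okounkov--Olshanski \cite{koo}. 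Tracking the identification $\lambda\mapsto(A|B)_\theta(\lambda)$ together with the embedding into $\Conf(\R\setminus\{0\})$, a positive lattice site $x_i\in\Z_{\geq0}+\frac12$ then obeys $(1-\xi)x_i=\big[(1-\xi)n\big]\cdot\big[x_i/n\big]+o(1)\to\delta\,\alpha_i=:u_i$, where $\delta$ is the Gamma limit of $(1-\xi)n$ and $x_i/n$ tends to the corresponding normalised coordinate; so the rescaled lattice configuration converges in law to the positive part $\{\delta\alpha_i\}$ of the configuration defining $\widetilde{M}^{\Spectral}_{z,\theta=\frac12}$.

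Third, I would convert this convergence of processes into the claimed convergence of correlation functions. The inclusion probability $\varrho_{n,\theta,\xi}(x_1,\dots,x_n)$ is the $n$-point correlation function of the lattice process with unit lattice spacing; rescaling the lattice by $1-\xi$ contributes a Jacobian $(1-\xi)^{-n}$, so that $(1-\xi)^{-n}\varrho_{n,\theta,\xi}(x_1,\dots,x_n)$ is the correlation density of the rescaled process at $u_1,\dots,u_n$, and its limit is the Lebesgue correlation density $\varrho^{\Spectral}_n(u_1,\dots,u_n)$ of $\widetilde{M}^{\Spectral}_{z,\theta=\frac12}$. This is the assertion.

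The hard part is the third step: upgrading the weak convergence in (i)--(ii) to convergence of correlation functions is strictly stronger and does not follow formally. One must control the lattice correlation functions uniformly in $\xi$ near $1$ --- for instance by a dominating estimate, or, more usefully here, by exhibiting the exact Pfaffian structure of the lattice process together with its discrete correlation kernel and passing to the limit in the kernel, which at the same time prepares the computation behind Theorem \ref{THEOREMMAINRESULT}. The genuine subtleties are that the two scalings are tied to the one parameter $1-\xi$, and that the limit $\xi\nearrow1$ must be interchanged with the summation over $n$; this is where the technical care concentrates.
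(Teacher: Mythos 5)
Your proposal is correct and follows essentially the same route as the paper, whose entire ``proof'' is the parenthetical remark that the statement can be proved by repetition of arguments from Borodin and Olshanski \cite{BorodinCombin}: your negative-binomial-to-Gamma mixing (with shape parameter $2z\bar z$ matching the lifting density), the boundary law of large numbers for the $z$-measures, and the $(1-\xi)^{-n}$ Jacobian are precisely the skeleton of those arguments, and you correctly locate the technical core in upgrading weak convergence of the rescaled processes to convergence of correlation measures. One caution: of your two suggested completions of the third step, passing to the limit in the lattice Pfaffian kernel by itself only shows that $(1-\xi)^{-n}\varrho_{n,\theta=\frac{1}{2},\xi}$ converges to \emph{some} Pfaffian expression, and identifying that limit with $\varrho^{\Spectral}_n$ would be circular here; the identification requires the uniform local bounds on the lattice correlation functions combined with the weak convergence of your steps (i)--(ii) --- i.e.\ your dominating-estimate alternative, which is exactly what the cited Borodin--Olshanski arguments supply.
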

(This statement can be proved by repetition of arguments from Borodin and Olshanski \cite{BorodinCombin}).
Proposition \ref{PropLimitRelationSpectralMK} shows that in order to compute the correlation function $\varrho_n$, it is necessarily
to have a formula for the correlation function $\varrho_{n,\theta=\frac{1}{2},\xi}$
defined by the mixed $z$-measure  with the Jack parameter $\theta=\frac{1}{2}$.
By Proposition \ref{PropositionMSymmetries} it is enough to compute the correlation functions $\varrho_{n,\theta=2,\xi}$
 defined by the mixed $z$-measure with the Jack parameter parameter $\theta=2$.
Such correlation functions were given in Strahov \cite{strahov2}. It was shown (see Strahov \cite{strahov2}, Section 2) that the correlation functions $\varrho_{n,\theta=2,\xi}$
can be written as Pfaffians with $2\times 2$ matrix valued kernels. Moreover,  these kernels are expressible in terms of the  Gauss hypergeometric functions, and  for matrix elements of these kernels there are double contour integral representations (see
Strahov \cite{strahov2}, Proposition 2.9, Proposition 2.10, equation (2.6)). Using these results  it is possible to compute the scaling limit of
the correlation functions $\varrho_{n,\theta=2,\xi}$ as $\xi\nearrow 1$, and to obtain the formula for $\varrho^{\Spectral}_n$ in  Theorem \ref{THEOREMMAINRESULT}.

\end{document}